\newtheorem{theorem}{Theorem}
\newtheorem{lemma}[theorem]{Lemma}
\newtheorem{corollary}[theorem]{Corollary}
\theoremstyle{definition}
\newtheorem*{remark}{Remark}
\title[Division in group rings of surface groups]
{Division in group rings of surface groups}
\author{Grigori Avramidi}
\address{Max Planck Institute for Mathematics\\
Bonn\\
Germany, 53111}
\def\ra{\rightarrow}
\def\beqa{\begin{eqnarray}}
\def\eeqa{\end{eqnarray}}
\def\beqa{\begin{eqnarray}}
\def\eeqa{\end{eqnarray}}
\DeclareMathOperator{\SO}{SO}
\begin{document}
\maketitle
\begin{abstract}
We prove a division algorithm for group rings of high genus surface groups and use it to show that some $2$-complexes with surface fundamental groups are standard. We also give an application of division to cohomological dimension of $2$-relator groups acting on $\mathbb H^n$.
\end{abstract}
\section{introduction}


The goal of this paper is to study $2$-complexes $X$ with a fixed fundamental group $\Gamma$ up to homotopy equivalence by means of a division algorithm over the group ring of $\Gamma$. These two things are related through the second homotopy group, which is a $\mathbb Z\Gamma$-module. Most of the mathematical content of the paper consists of proving a division algorithm for group rings of high genus surface groups. We find this interesting in its own right, even outside the context of $2$-complexes.  

\subsection*{On division} In the rational group ring of a free group there is a division algorithm analogous to polynomial long divison that was discovered by Moritz Cohn \cite{cohn}. A division algorithm is a process that lets one divide one element $x$ by another non-zero element $y$ with a remainder $r$ whose `size' is smaller than that of $y$. In the group ring $\mathbb QF_n$, the measure of `size' we use is the diameter of the support of the group ring element (defined at the end of this section), which we denote by $|\cdot|$. In symbols, a division algorithm asks for $q,r\in\mathbb QF_n$ such that $x=qy+r$ and $|r|<|y|$ or $r=0$. Unlike in the case of polynomial long division, there cannot be a division algorithm for nonabelian free groups that works for arbitrary $x$ and $y$. In fact, for a generic pair of group ring elements, the diameter of the support of any linear combination will be at least as large as that of either element, so there is no hope of obtaining a remainder of smaller diameter.\footnote{To take a concrete example, in the free group $F_2$ on the letters $g$ and $h$, any linear combination of $g-1$ and $h-1$ has diameter $\geq 1$, so it is not possible to divide $g-1$ by $h-1$ with remainder of zero diameter.} Therefore, in order to have hope there must be linear combinations of $x$ and $y$ of small diameter. What Cohn discovered is that there is a division algorithm {\it as long as $x$ and $y$ satisfy a non-trivial linear relation in the group ring}.\footnote{In the commutative case $\mathbb QF_1=\mathbb Q[t,t^{-1}]$ any pair of elements $x$ and $y$ satisfy the obvious relation $xy-yx=0$, and the algorithm becomes the usual long division for Laurent polynomials.} This condition means there are elements $a,b\in\mathbb QF_n$, not both zero, such that $ax+by=0$. In fact, a geometric picture of this relation is what dictates the process for actually running the algorithm (see section \ref{freegroupalg}).


In this paper, we show that the same division algorithm is true when $\Gamma$ is the fundamental group of a surface of sufficiently high genus. 

\begin{theorem}[Division algorithm for surface groups]
\label{surfacealg}
Let $\Gamma$ be the fundamental group of a closed surface of genus $\geq e^{1000000}$. Suppose $x$ and $y\not=0$ are elements in $\mathbb Q\Gamma$ satisfying a nontrivial relation $ax+by=0$. Then there are $q,r\in\mathbb Q\Gamma$ such that 
$x=qy+r$ and $|r|<|y|$ or $r=0$. 
\end{theorem}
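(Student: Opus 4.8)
The plan is to run Cohn's free-group division algorithm of Section~\ref{freegroupalg} verbatim in $\mathbb{Q}\Gamma$ and to prove that it still terminates and returns a small remainder, the new ingredient being the coarse geometry of the high-genus surface in place of the tree structure of a free group. First I would dispose of the trivial cases: if $x=0$ take $q=r=0$; if $|x|<|y|$ take $q=0$ and $r=x$. In the remaining case $|x|\ge|y|$, and since $\mathbb{Q}\Gamma$ has no zero divisors (surface groups are bi-orderable, hence their rational group ring embeds in a skew field), a nontrivial relation $ax+by=0$ forces $a\neq0$, $b\neq0$, and $z:=ax=-by\neq0$. The algorithm then proceeds recursively: from the relation one reads off a divisor $q_0$ of bounded complexity, replaces $x$ by $x_1:=x-q_0y$ (which still satisfies a nontrivial relation with $y$, namely $ax_1=-(b+aq_0)y$), recurses on $x_1$, and assembles $q=q_0+q_1+\cdots$ with $r$ the final remainder. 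The base case is clean: if the coefficient side can be taken to be a monomial, say $a=c\,g$ with $c\in\mathbb{Q}$ and $g\in\Gamma$, then $x=(-c^{-1}g^{-1}b)\,y$ and we are done with $r=0$. Exhibiting the invariant that strictly decreases along the recursion --- the analogue of the termination of Cohn's weak algorithm --- is part of the work, but, as in Cohn, it is driven entirely by a single geometric lemma.

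That lemma, which produces $q_0$ and certifies that $x_1$ is genuinely simpler, is the heart. Here I would use the standard presentation $\Gamma=\langle a_1,b_1,\dots,a_g,b_g\mid\prod_i[a_i,b_i]\rangle$: every length-two subword of the relator occurs exactly once along it, so its pieces have length $\le1$, and hence for large genus it satisfies the small-cancellation condition $C'(\lambda)$ with $\lambda$ as small as we please; in particular the Cayley graph is a tree out to radius $\sim g$ and, beyond that, geodesics and reduced van Kampen diagrams are tightly controlled. In this geometry I would examine the convex hull of $\mathrm{supp}(z)=\mathrm{supp}(ax)=\mathrm{supp}(by)$ and a vertex $v$ that is extremal for it --- one realizing the diameter, or, more robustly, a leaf of the hull. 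Since $v$ lies in the support, the coefficient $z_v\neq0$; writing $z_v$ as the sum over all factorizations $v=\alpha q$ with $\alpha\in\mathrm{supp}(a)$, $q\in\mathrm{supp}(x)$, and also as the sum over all $v=\beta p$ with $\beta\in\mathrm{supp}(b)$, $p\in\mathrm{supp}(y)$, the small-cancellation control should force only boundedly many of the translated copies $\alpha\,\mathrm{supp}(x)$ and $\beta\,\mathrm{supp}(y)$ to come near $v$, and in fact force a linear relation among their leading terms there. From that local relation one extracts a $q_0$ of bounded complexity for which $x-q_0y$ either has strictly smaller diameter or satisfies a relation with $y$ of strictly smaller complexity; iterating and collecting the $q_i$ then proves Theorem~\ref{surfacealg}.

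The main obstacle is exactly this forced-local-relation lemma. In a tree it is essentially automatic: geodesics are unique, the convex hull of a finite set is a finite subtree whose leaves are honest leaves, products of leading terms are again leading with no accidental cancellation, and precisely one translated copy of $\mathrm{supp}(x)$ reaches each extreme leaf --- which is what lets Cohn's algorithm run. In the surface group every one of these facts degrades: geodesics fellow-travel on a positive scale, the relator manufactures coincidences $\alpha_1q_1=\alpha_2q_2$, and near an extreme point arbitrarily many translated copies can pile up and conspire to cancel, which would wreck the pinning of $q_0$. One has to bound the number of such overlapping copies and rule out the conspiratorial cancellations, and for this the small-cancellation parameter $\lambda$ must lie below a fixed threshold produced by the counting --- equivalently, the genus must exceed an absolute constant. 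Since that constant is the output of a long chain of combinatorial estimates it is enormous and surely far from optimal; the bound $g\ge e^{1000000}$ in Theorem~\ref{surfacealg} is simply what comes out.
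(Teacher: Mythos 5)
Your high-level plan --- iterate a step that subtracts a multiple of $y$ from $x$ so as to strictly decrease a measure of complexity, using the relation $ax+by=0$ to pin down what to subtract --- is the right skeleton and matches the paper. But the step you flag as ``the heart'' and ``the main obstacle'' is exactly the content of the theorem, and you do not supply it; you also choose a geometric framework in which it would be harder, not easier, to supply.

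The paper does not work with small cancellation on the Cayley graph. It works with the $\Gamma$-action by isometries on $\mathbb{H}^2$, where the thin-triangles constant $\delta$ is a universal constant (one may take $\delta=5$), and what grows with genus is not a small-cancellation parameter but the \emph{infimum displacement} (the systole of the hyperbolic metric), via Buser's bound $\geq 2\sqrt{\log g}$. This separation --- $\delta$ fixed, displacement large --- is the precise engine of the argument. In your Cayley-graph picture both the ``tree radius'' and the hyperbolicity constant scale with the relator length, so there is no obvious fixed ratio to exploit; you would have to re-derive a quantitative thin-triangles estimate and a displacement lower bound in that setting and check they decouple. Within $\mathbb{H}^2$, the paper's replacement for your ``only one translated copy reaches each extreme leaf'' is Delzant's lemma (Lemma~\ref{delzant} and Corollary~\ref{products}): if two translates $\gamma X$, $\rho X$ inside $B_o(R)$ share a point within $\mu$ of the boundary sphere, then $\gamma^{-1}\rho$ moves the midpoint of a certain segment by at most $\mu+9\delta$, contradicting the displacement bound. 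That is exactly the ``forced uniqueness'' statement, and it is proved, not posited. Note it gives uniqueness, not merely boundedness, of the overlapping translate --- your proposal asks for only ``boundedly many'' copies plus a linear relation among ``leading terms,'' which is both weaker and vaguer than what is needed to make a Cohn-style cancellation go through.

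Two further points. First, the core difficulty is not ``finding $q_0$ of bounded complexity''; the paper subtracts a whole weighted sum $\sum c_\gamma\gamma y$ of $y$-translates chosen to kill all the $5\delta$-extremal points of $x$, and the genuinely hard part is proving $|x'|<|x|$ afterwards. That proof runs through a chain of lemmas you do not have: the extremal cancellation lemma (Lemma~\ref{5delta}), the fellow-traveling estimate, the barycenter approximation (Lemma~\ref{bary}, Corollaries~\ref{barydistance} and~\ref{jump}), and a case split on $|x|-|y|\lessgtr\mu$. Without a substitute for that chain the proposal stops exactly where the theorem starts. Second, you use bi-orderability of surface groups to get no zero divisors in $\mathbb{Q}\Gamma$; the paper instead deduces this from the same displacement machinery (Delzant's corollary), which is cleaner here because the whole argument already lives in that framework, and because bi-orderability of nonorientable surface groups would need a separate check. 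Neither point is fatal, but they illustrate that your plan names the right mountain without a route up it.
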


Our method is inspired by Hog-Angeloni's geometric proof of Cohn's division algorithm \cite{hogangeloni} and by Delzant's proof that groups rings of hyperbolic groups with large infimum displacement have no zero divisors \cite{delzant}. 

\subsection*{Euclid's algorithm for finding the greatest common divisor} The process of applying the division algorithm repeatedly to a pair of elements, dividing at each stage the divisor from the previous stage by the remainder is called Euclid's algorithm. Starting from the division algorithm in the integers (or in the polynomial ring $\mathbb Q[t]$) Euclid's algorithm produces the greatest common divisor of two integers (or polynomials). The same is true in our case.
    
\begin{corollary}[Euclid's algorithm for surface groups]
\label{euclid}
Applying the division algorithm repeatedly, first dividing $x$ by $y$ to obtain a remainder $r_1$, then dividing $y$ by $r_1$ to obtain a remainder $r_2$, and so on, eventually produces an element $z:=r_k$ that divides
the previous $r_{k-1}$ with no remainder. The element $z$ obtained in this way is a greatest\footnote{We say $z$ is a {\it divisor} of $x$ if $x=az$ for some $a\in\mathbb Q\Gamma$. It is a {\it greatest common divisor} of $x$ and $y$ if $z$ is a divisor of $x$ and $y$ and for any other divisor $z'$ of $x$ and $y$, $z'$ divides $z$. We say `a' here instead of `the' because greatest common divisors are only well-defined up to multiplication by a unit in $\mathbb Q\Gamma$.} common divisor of $x$ and $y$. 
\end{corollary}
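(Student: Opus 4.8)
The plan is to run the classical Euclidean argument; the only ingredient beyond Theorem~\ref{surfacealg} is the bookkeeping that keeps its hypothesis valid at every stage. Set $r_{-1}:=x$ and $r_0:=y$, and for $i\ge 0$, as long as $r_i\neq 0$, let $r_{i-1}=q_{i+1}r_i+r_{i+1}$ be an application of the division algorithm, so that $|r_{i+1}|<|r_i|$ or $r_{i+1}=0$. Two things need to be checked: (a) whenever $r_i\neq 0$, the pair $(r_{i-1},r_i)$ satisfies a nontrivial relation, so that Theorem~\ref{surfacealg} really does produce such $q_{i+1},r_{i+1}$; and (b) the process terminates.

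For (a) I would show by induction that for every $i\ge -1$ for which $r_{i+1}$ has been defined, there is a pair $(c_i,d_i)\neq(0,0)$ in $\Q\Gamma$ with $c_i r_i+d_i r_{i+1}=0$. The base case is the given relation $ax+by=0$, so $(c_{-1},d_{-1})=(a,b)$. For the inductive step, substitute $r_{i-1}=q_{i+1}r_i+r_{i+1}$ into $c_{i-1}r_{i-1}+d_{i-1}r_i=0$ to obtain $(c_{i-1}q_{i+1}+d_{i-1})\,r_i+c_{i-1}\,r_{i+1}=0$; thus one may take $c_i=c_{i-1}q_{i+1}+d_{i-1}$ and $d_i=c_{i-1}$. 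This pair is again nonzero: if $c_{i-1}\neq 0$ then $d_i\neq 0$, while if $c_{i-1}=0$ then $d_{i-1}\neq 0$ forces $c_i=d_{i-1}\neq 0$. (In particular, no assumption about zero divisors in $\Q\Gamma$ is needed.) For (b), $|\cdot|$ takes values in the non-negative integers, so the strictly decreasing sequence $|r_0|>|r_1|>\cdots$ cannot go on forever; hence there is a smallest $k\ge 0$ with $r_{k+1}=0$, and then $r_{k-1}=q_{k+1}r_k$, i.e. $z:=r_k$ (the last nonzero remainder) divides $r_{k-1}$.

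It remains to check that $z$ is a greatest common divisor. That $z$ divides $r_i$ for all $-1\le i\le k$, and in particular divides $x=r_{-1}$ and $y=r_0$, follows by downward induction from $r_{i-1}=q_{i+1}r_i+r_{i+1}$: if $r_i,r_{i+1}\in(\Q\Gamma)z$ then $r_{i-1}=q_{i+1}r_i+r_{i+1}\in(\Q\Gamma)z$, using that $(\Q\Gamma)z$ is a left ideal. Conversely, if $z'$ divides both $x$ and $y$, then $r_{-1}=x$ and $r_0=y$ lie in the left ideal $(\Q\Gamma)z'$, and rewriting the same division as $r_{i+1}=r_{i-1}-q_{i+1}r_i$ shows by forward induction that every $r_i$, and in particular $z=r_k$, lies in $(\Q\Gamma)z'$; that is, $z'$ divides $z$. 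Hence $z$ is a greatest common divisor of $x$ and $y$.

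The substantive content of the corollary is entirely in Theorem~\ref{surfacealg}; the only point that is not verbatim the classical Euclidean algorithm is step (a) — that dividing does not destroy the linear-relation hypothesis — and that is exactly the elementary observation above that the transformed coefficient pair $(c_i,d_i)=(c_{i-1}q_{i+1}+d_{i-1},\,c_{i-1})$ is never $(0,0)$, so I expect this to be the only place requiring a moment's care.
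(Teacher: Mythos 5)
Your argument is correct and follows essentially the same route as the paper: iterate the division, check termination, and verify that $z$ divides (and is divided by) everything in sight via the recursion $r_{i-1}=q_{i+1}r_i+r_{i+1}$. The one point you spell out more carefully than the paper is the inductive bookkeeping that $(c_i,d_i)=(c_{i-1}q_{i+1}+d_{i-1},\,c_{i-1})$ never vanishes, which indeed makes the preservation of a nontrivial relation a purely formal matter independent of the absence of zero divisors.
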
 

\subsection*{Algebraic application}

Rephrasing things a bit, Euclid's algorithm implies that the (left) 
ideal $(x,y)$ generated by the pair of elements $x,y\in\mathbb Q\Gamma$ is always free: If $x$ and $y$ do not satisfy any relation, then they are a free basis for the ideal, and if they do satisfy a relation then the ideal is generated by their greatest common divisor $z$. But, by the theorem of Delzant alluded to earlier, $z$ is not a zero-divisor, which is the same as saying that the ideal $z$ generates is free. A similar argument shows any pair of vectors $v,w\in\mathbb Q\Gamma^n$ generate a free $\mathbb Q\Gamma$-module, and all the arguments work over any field, in particular over the finite fields $\mathbb F_p$.

\begin{corollary}
\label{freefield}
For any field $k$, any submodule $M$ of $k\Gamma^n$ generated by a pair of vectors is free.
\end{corollary}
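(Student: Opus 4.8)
The plan is to run, over an arbitrary field $k$, the argument sketched just before the statement: a vector version of Euclid's algorithm reduces a submodule $M$ generated by two vectors either to a free module of rank two or to a cyclic module, and the cyclic case is dispatched using that $k\Gamma$ has no zero divisors.

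I would first record two preliminary facts. (i) The division algorithm of Theorem \ref{surfacealg}, hence Euclid's algorithm of Corollary \ref{euclid}, and also Delzant's theorem \cite{delzant} that $k\Gamma$ has no zero divisors (our surface group has sufficiently large infimum displacement), hold verbatim with $\mathbb Q$ replaced by any field $k$: their proofs combine linear algebra over the coefficient field with the coarse geometry of the surface group and use nothing special about $\mathbb Q$, in particular no characteristic-zero hypothesis. (ii) There is an evident vector analogue of the division algorithm. For $v=(v_1,\dots,v_n)\in k\Gamma^n$ let $|v|$ denote the diameter of the union of the supports of the coordinates $v_i$; then for $v,w\in k\Gamma^n$ with $w\neq 0$ satisfying a nontrivial relation $av+bw=0$ (with $a,b\in k\Gamma$, not both zero), there exist a \emph{scalar} $q\in k\Gamma$ and a vector $r=v-qw$ with $|r|<|w|$ or $r=0$. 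The proof is the same as in the scalar case: the relation, and the geometric picture it determines, is what runs the algorithm, and nothing in that picture cares whether one divides a group-ring element or a tuple of them.

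Granting this, let $M=k\Gamma v+k\Gamma w\subseteq k\Gamma^n$ and distinguish two cases. If $v$ and $w$ satisfy no nontrivial relation, the natural surjection $k\Gamma^2\to M$, $(a,b)\mapsto av+bw$, is also injective, so $M\cong k\Gamma^2$ is free. Otherwise fix a nontrivial relation $av+bw=0$. If $b=0$ then $a\neq 0$ and $av=0$; reading this off a nonzero coordinate of $v$ and using that $k\Gamma$ has no zero divisors forces $v=0$, so $M=k\Gamma w$ is cyclic, and symmetrically if $a=0$. If $a,b\neq 0$ then $v=0$ iff $w=0$ (e.g. $v=0\Rightarrow bw=0\Rightarrow w=0$): in that case $M=0$, and otherwise $v,w\neq 0$ and we iterate vector division, dividing $v$ by $w$ to get $M=k\Gamma w+k\Gamma r_1$ with $|r_1|<|w|$ or $r_1=0$, then $w$ by $r_1$, and so on. At each stage a nontrivial relation is inherited (the coefficient of the newest remainder cannot vanish, or that remainder would give a zero-divisor relation among nonzero elements), and the diameters $|w|>|r_1|>|r_2|>\cdots$ strictly decrease through non-negative integers, so the process stops and $M=k\Gamma z$, where $z$ is the last nonzero remainder (nonzero since $w\neq 0$). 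In every case $M=k\Gamma z$ for some $z\in k\Gamma^n$, with $M=0$ when $z=0$; when $z\neq0$, choosing a nonzero coordinate $z_i$ we have $az=0\Rightarrow az_i=0\Rightarrow a=0$ because $k\Gamma$ has no zero divisors, so $a\mapsto az$ gives an isomorphism $k\Gamma\cong M$ and $M$ is free of rank one.

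The substance of the argument is entirely in facts (i) and (ii): checking that the proof of the division algorithm is indifferent to the coefficient field, and transporting its geometric core from single group-ring elements to tuples. Once those are in hand, the corollary is a formal consequence of Euclid's algorithm together with Delzant's zero-divisor theorem, just as in the rational scalar case.
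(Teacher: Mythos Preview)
Your approach differs from the paper's in one essential respect: you posit a \emph{vector} division algorithm (your fact (ii)), whereas the paper uses only the scalar division algorithm of Theorem \ref{surfacealg}, applied to a single coordinate. Concretely, the paper arranges $v_1\neq 0$ and $|v_1|\geq |w_1|$. If $w_1=0$, then $av_1=0$ forces $a=0$, hence $bw=0$ and $w=0$, so $M=(v)$ is cyclic. If $w_1\neq 0$, the scalar relation $av_1+bw_1=0$ lets one divide $v_1$ by $w_1$, producing $v'=v-qw$ with $|v'_1|<|w_1|$ or $v'_1=0$; now iterate, strictly decreasing $|v_1|+|w_1|$ until one first coordinate vanishes and we land in the cyclic case. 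At no point does the paper need to control the diameter of the full vector support $\bigcup_i\operatorname{supp}(v_i)$.

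Your route is not wrong in spirit, but fact (ii) is not a tautology. Running the proof of Theorem \ref{surfacealg} with $V=\bigcup_i\operatorname{supp}(v_i)$ in place of $X$ requires checking, for instance, that cancelling a $5\delta$-extremal point $p\in V$ by a single translate $\rho w$ really kills $p$ in \emph{every} coordinate $v'_i$ simultaneously (including those $i$ with $p\notin V_i$, where one must argue $p\notin\rho W_i$ as well), and that the subsequent diameter estimates (Corollary \ref{jump} and the case split on $|x|-|y|$) go through verbatim with $V,W$ in place of $X,Y$. These checks appear to succeed, but ``the proof is the same'' understates the verification; the paper's coordinate-wise reduction buys the corollary with no new geometry at all. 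A small slip: the diameters are not non-negative integers but real numbers lying in a discrete set (distances within a $\Gamma$-orbit in $\mathbb H^2$); it is discreteness, not integrality, that makes the descent terminate.
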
 
For topological applications, we need this sort of result over $\mathbb Z\Gamma$. 
Using a `local-to-global' method of Bass (\cite{bass}) we assemble the $\mathbb Q$ and $\mathbb F_p$ statements together to prove such a result {\it under the additional assumption\footnote{An assumption {\it is} needed: the ideal $(2,t-1)$ in $\mathbb Z[\mathbb Z]=\mathbb Z[t,t^{-1}]$ is not free even though all ideals in $k[\mathbb Z]$ are.} that the quotient $\mathbb Z\Gamma^n/M$ is torsion-free}. This is good enough for us since the topologically meaningful 
modules associated to a $2$-complex satisfy this condition.

\begin{corollary}
\label{freez}
If a submodule $M$ of $\mathbb Z\Gamma^n$ is generated by a pair of vectors and $\mathbb Z\Gamma^n/M$ is torsion-free, then $M$ is free. 
\end{corollary}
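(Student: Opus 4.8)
The plan is to verify the conclusion after base change to $\mathbb{Q}$ and to each residue field $\mathbb{F}_p$ using Corollary \ref{freefield}, and then to reassemble these local statements into the integral one by Bass's local-to-global method \cite{bass}.

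First I would set up the relation module. Let $\pi\colon\mathbb{Z}\Gamma^2\rightarrow\mathbb{Z}\Gamma^n$ send the two standard generators to the given pair of vectors $v,w$ generating $M$, so $M=\operatorname{im}\pi$. If $\ker\pi=0$, then $v,w$ form a free basis and $M\cong\mathbb{Z}\Gamma^2$; so I may assume $v$ and $w$ satisfy a nontrivial relation over $\mathbb{Z}\Gamma$ (and set aside the trivial case $M=0$), and aim to show $M\cong\mathbb{Z}\Gamma$.

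Next I would check the two kinds of localizations. Over $\mathbb{Q}$: since $\mathbb{Q}$ is flat over $\mathbb{Z}$, the module $\mathbb{Q}\otimes_{\mathbb{Z}}M$ is the $\mathbb{Q}\Gamma$-submodule of $\mathbb{Q}\Gamma^n$ generated by $v$ and $w$, which still satisfy the relation; by Corollary \ref{freefield} it is free, and the Euclidean argument behind that corollary (Corollary \ref{euclid}) exhibits it as generated by a single greatest-common-divisor vector, hence cyclic — so it is free of rank $1$, being nonzero since $M\neq0$. Over $\mathbb{F}_p$ is where the torsion-free hypothesis enters. Because $\mathbb{Z}\Gamma^n/M$ is torsion-free, $\operatorname{Tor}^{\mathbb{Z}}_1(\mathbb{Z}\Gamma^n/M,\mathbb{F}_p)=0$, so $M/pM$ injects into $\mathbb{F}_p\Gamma^n$ and is exactly the $\mathbb{F}_p\Gamma$-submodule generated by the reductions $\bar v,\bar w$; and these reductions again satisfy a nontrivial relation over $\mathbb{F}_p\Gamma$ — given a relation $av+bw=0$ over $\mathbb{Z}\Gamma$, I would divide $a$ and $b$ by the highest power of $p$ dividing both (legitimate because $\mathbb{Z}\Gamma^n$ is a free, hence torsion-free, abelian group, so no nonzero element is infinitely $p$-divisible) to obtain a relation reducing to something nonzero modulo $p$. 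Corollary \ref{freefield} then makes $M/pM$ free; the same no-infinite-divisibility remark shows $M\neq pM$, so it is nonzero, and it is cyclic for the same reason as before. Hence $M/pM$ is free of rank $1$ for every prime $p$.

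At this stage $M$ is a finitely generated, $\mathbb{Z}$-torsion-free $\mathbb{Z}\Gamma$-module with $\mathbb{Q}\otimes M\cong\mathbb{Q}\Gamma$ and $M/pM\cong\mathbb{F}_p\Gamma$ for every $p$, and the remaining step is Bass's local-to-global argument \cite{bass}. Projectivity of $M$ over $\mathbb{Z}\Gamma$ should be detectable from these base changes together with the $\mathbb{Z}$-flatness of $\mathbb{Z}\Gamma$ and the torsion-free hypothesis, the fracture of $\mathbb{Z}$ into $\mathbb{Q}$ and the $\mathbb{F}_p$ providing the gluing; and once $M$ is projective of the common rank $1$, it is free. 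I expect this assembly to be the main obstacle. Unlike the reductions, which are essentially formal given Corollary \ref{freefield}, going back up to $\mathbb{Z}\Gamma$ is not automatic — over a general group ring ``locally free'' does not force ``free'' — so one has to run Bass's patching carefully in the non-Noetherian ring $\mathbb{Z}\Gamma$ and confirm that the relevant obstruction vanishes here because $\mathbb{Z}$ is a principal ideal domain. The footnoted example $(2,t-1)\subset\mathbb{Z}[t,t^{-1}]$ shows why the torsion-free hypothesis cannot be dropped: there $M/2M$ fails to embed in $\mathbb{F}_2[t,t^{-1}]$ precisely because $\mathbb{Z}[t,t^{-1}]/M$ has $2$-torsion, which is exactly where the local pictures at different primes stop being mutually compatible.
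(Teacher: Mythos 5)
Your localization steps are sound and do correctly identify where the torsion-free hypothesis enters: you are right that it forces $\operatorname{Tor}^{\mathbb Z}_1(\mathbb Z\Gamma^n/M,\mathbb F_p)=0$, hence $M/pM\hookrightarrow\mathbb F_p\Gamma^n$, and the trick of stripping a common $p$-power from a $\mathbb Z\Gamma$-relation to get a nontrivial $\mathbb F_p\Gamma$-relation is correct. So you have $\mathbb Q\otimes M\cong\mathbb Q\Gamma$ and $M/pM\cong\mathbb F_p\Gamma$ for every $p$. But the proof stops exactly at the step that actually carries the weight. Saying ``Projectivity of $M$ should be detectable from these base changes'' and ``one has to run Bass's patching carefully and confirm the obstruction vanishes'' is not an argument; it is a statement of intent, and you say yourself you expect it to be the main obstacle. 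Worse, the framing ``show $M$ is projective of rank $1$, then conclude it is free'' does not close the gap: over a general group ring, finitely generated projective modules of rank $1$ need not be free, and nothing you have said rules that out for $\mathbb Z\Gamma$. There is no off-the-shelf ``locally free $\Rightarrow$ free'' lemma to invoke here.

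The paper's argument avoids the detour through abstract projectivity entirely. It takes the greatest common divisor $z\in\mathbb Q\Gamma^n$ of $v$ and $w$ produced by Euclid's algorithm over $\mathbb Q\Gamma$, rescales $z$ so that $z\in(v,w)$ but $z\notin(kv,kw)$ for every integer $k>1$, and picks the smallest positive integer $m$ with $(mv,mw)\subset(z)$; the goal becomes showing $m=1$, which gives $M=(z)$, free because $z$ is not a zero-divisor. The mod-$p$ freeness from Corollary~\ref{freefield} is used only at one precise point: to show the map $(z)_p\to(v,w)_p$ has image a cyclic submodule of a free $\mathbb F_p\Gamma$-module, hence is free, hence the map is either injective or zero; injectivity of $(v,w)_p\to\mathbb F_p\Gamma^n$ (your Tor argument) and the rescaling normalization rule out the zero case. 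Then the vanishing of the composite $(mv,mw)_p\to(z)_p\to(v,w)_p$ forces $(mv,mw)\subset(pz)$, contradicting minimality of $m$ when $p\mid m$. This concrete descent on $m$ is what ``Bass's patching'' actually looks like in this situation, and it is the piece your write-up is missing.
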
 

\subsection*{Non-free examples}

To put the division algorithm and its corollaries into context, note that corollary \ref{freez} is false for the group $\mathbb Z^2$: The ideal $(s-1,t-1)$ in $\mathbb Z[\mathbb Z^2]= \mathbb Z[s,s^{-1},t,t^{-1}]$ is not free since it has the obvious relation $(s-1)(t-1)=(t-1)(s-1)$ and cannot be generated by one element. More generally, for any non-free group $\Gamma$ generated by a pair of elements $a$ and $b$, the ideal $(a-1,b-1)$ in $\mathbb Z\Gamma$ is not free.\footnote{If $(a-1,b-1)$ is free then $\Gamma$ has cohomological dimension one, hence is free by Stallings' theorem (\cite{stallings}).} Remarkably, there is a $2$-generator, $2$-relator group that, by Thurston's work (4.7 of \cite{thurstonnotes}), arises as the fundamental group of a closed hyperbolic $3$-manifold obtained by Dehn filling on the figure-eight knot complement (see section \ref{2relator}). So, the division algorithm and its corollaries do not extend to fundamental groups of arbitrary hyperbolic manifolds.  

\subsection*{Group theoretic application}
Our proof of the division algorithm does work word-for-word for any group that acts by isometries on hyperbolic space $\mathbb H^n$ with {\it large infimum displacement} (this as a quantitative improvement on torsion-freeness). Free groups and high genus surface groups are low-dimensional groups that have such actions. As an application of division, we show that any (cohomologically) higher dimensional group that has such an action requires more than two relations to present. In other words 

\begin{corollary}
\label{2rel}
Suppose $\Gamma$ is a finitely generated $2$-relator group acting by isometries on $\mathbb H^n$ with infimum displacement $\geq 2000$. Then $\Gamma$ has cohomological dimension $\leq 2$. $($\footnote{Torsion free $1$-relator groups have the stronger property of having aspherical presentation complexes (\cite{cockcroft}).}$)$
\end{corollary}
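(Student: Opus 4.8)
The plan is to use a 2-relator presentation $\Gamma = \langle S \mid R_1, R_2 \rangle$ with presentation 2-complex $X$, and to analyze the second homotopy module $\pi_2(X) = \pi_2(\widetilde X)$ as a $\mathbb Z\Gamma$-module. The key identification is that $\pi_2(X)$ is the kernel of the boundary map $C_2(\widetilde X) \to C_1(\widetilde X)$ in the chain complex of the universal cover; since there are exactly two 2-cells, $C_2(\widetilde X) \cong \mathbb Z\Gamma^2$, so $\pi_2(X)$ is precisely the submodule of relations among a pair of vectors in $\mathbb Z\Gamma^{|S|}$, namely the images of the two relator cells under Fox-derivative boundary maps. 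First I would check that the hypothesis of Corollary \ref{freez} applies: the quotient $C_2(\widetilde X)/\pi_2(X)$ embeds in the free module $C_1(\widetilde X)$, hence is torsion-free, so Corollary \ref{freez} (whose proof, as noted, goes through word-for-word for any group acting on $\mathbb H^n$ with large infimum displacement, here $\geq 2000$) tells us $\pi_2(X)$ is a free $\mathbb Z\Gamma$-module.

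Next I would use this freeness to build a short resolution. If $\pi_2(X) = 0$, then $X$ is aspherical, $X = K(\Gamma,1)$, and $\operatorname{cd}\Gamma \leq 2$ immediately. If $\pi_2(X) \neq 0$, pick a free basis; attaching 3-cells along a set of spherical maps representing this basis kills $\pi_2$, and one checks the resulting 3-complex $Y$ is aspherical (its universal cover is 2-connected and, being 3-dimensional with the 3-cells exactly killing $H_3$... — here one must verify $H_3(\widetilde Y) = 0$). Then the cellular chain complex of $\widetilde Y$ gives a free resolution $0 \to C_3 \to C_2 \to C_1 \to C_0 \to \mathbb Z \to 0$ of length 3, so a priori $\operatorname{cd}\Gamma \leq 3$. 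To push this down to $\leq 2$, I would instead argue directly: since $\pi_2(X)$ is free, the truncated complex $0 \to \pi_2(X) \to C_2(\widetilde X) \to C_1(\widetilde X) \to C_0(\widetilde X) \to \mathbb Z \to 0$ is already a free resolution of $\mathbb Z$ over $\mathbb Z\Gamma$ of length 3, but the inclusion $\pi_2(X) \hookrightarrow C_2$ of one free module into another that is split-injective after one more step — more precisely, I would show that $H^n(\Gamma; \mathbb Z\Gamma) = 0$ for $n \geq 3$ using that $\pi_2(X)$ being free and finitely generated (it is finitely generated since $X$ is a finite complex and $\mathbb Z\Gamma$ is coherent in these cases, or one argues $\pi_2$ is generated by finitely many elements directly), and then invoke that a group with a finite-type classifying complex and $H^*(\Gamma;\mathbb Z\Gamma)$ concentrated in degrees $\leq 2$ has $\operatorname{cd}\Gamma \leq 2$ by the Bieri–Eckmann characterization, together with the fact that $\Gamma$ acting freely on $\mathbb H^n$ is torsion-free so that Shapiro/FP considerations give no obstruction.

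Cleaner still, and the route I would actually write: once $\pi_2(X)$ is free, form the aspherical 3-complex $Y = X \cup (\text{3-cells along a basis of }\pi_2)$ as above; its chain complex $0 \to \mathbb Z\Gamma^b \xrightarrow{\partial_3} \mathbb Z\Gamma^2 \xrightarrow{\partial_2} \mathbb Z\Gamma^{|S|} \xrightarrow{\partial_1} \mathbb Z\Gamma \to \mathbb Z \to 0$ is a free resolution, and the map $\partial_3$ is an isomorphism onto $\pi_2(X) = \ker\partial_2$, which is free of rank $b$; but $\partial_3$ being injective with free image sitting inside the free module $\mathbb Z\Gamma^2$ means the complex $\mathbb Z\Gamma^b \to \mathbb Z\Gamma^2$ is split (a monomorphism of free modules over a ring where the relevant module $\mathbb Z\Gamma^2/\operatorname{im}\partial_3$ is also free, since it equals $\operatorname{im}\partial_2$ which is a submodule of the free module $\mathbb Z\Gamma^{|S|}$ with torsion-free quotient, so Corollary \ref{freez} applies again to it as a submodule generated by... — here one needs a mild extension of Corollary \ref{freez} from pairs to the relevant number of generators, which the excerpt flags is available). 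Splitting off the $\partial_3$ term leaves the length-2 resolution $0 \to \operatorname{im}\partial_2 \to \mathbb Z\Gamma^{|S|} \to \mathbb Z\Gamma \to \mathbb Z \to 0$ with $\operatorname{im}\partial_2$ free, hence $\operatorname{cd}\Gamma \leq 2$.

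The main obstacle I anticipate is the bookkeeping around \emph{finite generation} of $\pi_2(X)$ and the passage from "submodule generated by a pair of vectors" in Corollary \ref{freez} to the modules that actually arise here (the relation module of two relators lives in $\mathbb Z\Gamma^{|S|}$ and is generated by two vectors, so it is on the nose; but the second application, to $\operatorname{im}\partial_2 \subset \mathbb Z\Gamma^{|S|}$, needs the version of the corollary for possibly more generators, together with verifying the torsion-free-quotient hypothesis). A secondary point requiring care is confirming that $Y$ is genuinely aspherical — i.e., that attaching 3-cells along a \emph{basis} (not merely a generating set) of the free module $\pi_2(X)$ produces a complex whose universal cover has vanishing $\pi_3$ and higher; this follows because the attaching map induces on $\pi_2$ precisely the quotient by a free direct summand complement, so $\pi_2(\widetilde Y) = 0$, and then $\pi_n(\widetilde Y) = H_n(\widetilde Y)$ for $n \geq 3$ by Hurewicz, with $H_n(\widetilde Y) = 0$ for $n \geq 4$ by dimension and $H_3(\widetilde Y) = \ker\partial_3 = 0$ by the injectivity of $\partial_3$.
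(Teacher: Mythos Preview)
Your proposal takes an unnecessarily circuitous route and has a genuine gap in its first step. You want to apply Corollary~\ref{freez} to $\pi_2(X)=\ker\partial_2\subset C_2(\widetilde X)\cong\mathbb Z\Gamma^2$, but that corollary requires the submodule to be \emph{generated by a pair of vectors}, and you have no control over the number of generators of $\pi_2(X)$. You flag this yourself (``bookkeeping around finite generation of $\pi_2(X)$'') but never resolve it; coherence of $\mathbb Z\Gamma$ is neither proved nor available here, and in any case finite generation is not the same as two-generation.

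The paper's argument is much more direct, and in fact you stumble onto it at the very end without realizing it suffices on its own. Apply Corollary~\ref{freez} not to $\pi_2(X)$ but to the \emph{relation module} $R:=\operatorname{im}\partial_2\subset C_1(\widetilde X)=\mathbb Z\Gamma^{|S|}$. This module is automatically two-generated, since it is the image of $\mathbb Z\Gamma^2$ (your worry that this ``needs the version of the corollary for possibly more generators'' is mistaken). The quotient $C_1/R$ is torsion-free because $H_1(\widetilde X)=0$ forces $R=\ker\partial_1$, so $C_1/R\cong\operatorname{im}\partial_1\subset C_0$. Hence Corollary~\ref{freez} gives that $R$ is free, and
\[
0\to R\to C_1(\widetilde X)\to C_0(\widetilde X)\to\mathbb Z\to 0
\]
is a free $\mathbb Z\Gamma$-resolution of length two. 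That is the whole proof of $\cd\Gamma\leq 2$. The detours through attaching $3$-cells, asphericity of $Y$, Bieri--Eckmann, and $H^*(\Gamma;\mathbb Z\Gamma)$ are all unnecessary; freeness of $\pi_2(X)$ is a separate (and subsequent) conclusion in the paper, not an ingredient in the cohomological-dimension bound.
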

It seems clear that the method should work for $\delta$-hyperbolic groups of large infimum displacement and carrying out the details of this might make a good Master thesis. 

\subsection*{Topological application}
Let us now turn to the topological application mentioned at the beginning of the introduction. An old theorem of Tietze \cite{fox} says that two $2$-complexes with the same fundamental group and Euler characteristic become homotopy equivalent after wedging both of them with the same suffiiently large number of $2$-spheres. A basic question is to determine whether wedging on these extra $2$-spheres is really necessary. One of the first examples of inequivalent $2$-complexes with the same fundamental group and Euler characteristic involves the trefoil group $T=\left<a,b\mid a^2=b^3\right>$. Let $Y$ be the presentation $2$-complex corresponding to this standard presentation. Dunwoody constructed another presentation $2$-complex $X$ for the trefoil group whose second homotopy group $\pi_2X$ is not free as a $\mathbb ZT$-module (\cite{dunwoody}). This complex has two generators and two relations so it has the same Euler characteristic as $Y\vee S^2$, but is not homotopy equivalent to it ($\pi_2(Y\vee S^2)$ is free since $Y$ is aspherical). Dunwoody also showed that the complexes $X$ and $Y\vee S^2$ do become homotopy equivalent after wedging on another $S^2$, which on the level of $\pi_2$ says that $\pi_2 X\oplus\mathbb ZT=\mathbb ZT\oplus\mathbb ZT$. So, $\pi_2X$ is generated by two elements and is stably free but not free.\footnote{The Klein bottle group $K=\left<a,b\mid a^2b^2=1\right>$ also has such stably free but not free $\mathbb ZK$-modules generated by a pair of elements, but they have not yet been geometrically realized as $\pi_2$-modules of $2$-complexes (\cite{harlander}).} Corollary \ref{freez} implies that this algebraic phenomenon does not happen for fundamental groups $\Gamma$ of high genus surfaces.

We can also ask whether a similar topological phenomenon to the one discovered by Dunwoody can happen for surface groups $\Gamma=\pi_1\Sigma$ in place of the trefoil group $T$. If $X$ is a $2$-complex with surface fundamental group and minimal Euler characteristic $\chi(X)=\chi(\Sigma)$, then it is easy to see that $X$ is homotopy equivalent to $\Sigma$. The first interesting case when the Euler characteristic is non-minimal is $\chi(X) =\chi(\Sigma)+1$. The main point is to show $\pi_2X$ is free. One way\footnote{Another way to show $\pi_2X$ is free, which also works for the groups in Corollary \ref{2rel}, is given in section \ref{2relator}.} is to use a theorem of Louder (\cite{louder}) which implies (see section \ref{topology}) that $X$ becomes standard 
after wedging on $\#(2$-cells of $X)-(\chi(X)-\chi(\Sigma))$ different $2$-spheres. So, if $X$ has two $2$-cells then $X\vee S^2$ is homotopy equivalent to $\Sigma\vee S^2\vee S^2$. On $\pi_2$, this implies $\pi_2X$ is stably free and generated by two elements. If the surface has high enough genus then Corollary \ref{freez} implies that $\pi_2X$ is free, and hence $X$ is homotopy equivalent to $\Sigma\vee S^2$. In summary 

\begin{theorem}
\label{standard}
Suppose $X$ is a $2$-complex with two $2$-cells and surface fundamental group $\pi_1 X=\pi_1\Sigma$. If the genus of the surface is $\geq e^{1000000}$, then $X$ is homotopy equivalent to $\Sigma$ or $\Sigma\vee S^2$. 
\end{theorem}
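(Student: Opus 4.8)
The plan is to reduce Theorem~\ref{standard} to Corollary~\ref{freez} by analyzing the chain complex of the universal cover $\widetilde X$. Since $\pi_1 X = \pi_1\Sigma =: \Gamma$ and $X$ has two $2$-cells, the cellular chain complex of $\widetilde X$ has the form
\[
\mathbb Z\Gamma^{c_2} \xrightarrow{\partial_2} \mathbb Z\Gamma^{c_1} \xrightarrow{\partial_1} \mathbb Z\Gamma^{c_0},
\]
with $c_2 = 2$, and $\pi_2 X = H_2(\widetilde X) = \ker\partial_2$ because $\widetilde X$ is simply connected. First I would record the hypothesis $\chi(X) = \chi(\Sigma) + 1$: indeed, $X$ is obtained from a presentation-type complex for $\Gamma$ by attaching an extra $2$-cell relative to a minimal model, so the Euler characteristic is one more than that of $\Sigma$; equivalently, since $\Gamma$ has a $2$-dimensional $K(\Gamma,1)$ which is a surface, one computes $\chi(X) - \chi(\Sigma)$ from the deficiency and finds it equals $1$ when $X$ has two $2$-cells. (If $\chi(X) = \chi(\Sigma)$ the standard argument gives $X \simeq \Sigma$ directly, which is the first alternative in the statement, so I would dispense with that case at the outset.)

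Next I would invoke Louder's theorem as quoted in the excerpt: $X$ becomes \emph{standard} after wedging on $c_2 - (\chi(X) - \chi(\Sigma)) = 2 - 1 = 1$ two-sphere. Being standard means $X \vee S^2$ is homotopy equivalent to the complex $\Sigma \vee S^2 \vee S^2$ obtained from a minimal-Euler-characteristic model of $\Sigma$ by wedging on the appropriate number of spheres. Passing to second homotopy groups and using that $\pi_2$ of a wedge adds a free summand $\mathbb Z\Gamma$ for each wedged $S^2$, and that $\pi_2\Sigma = 0$ since $\Sigma$ is aspherical, this yields an isomorphism of $\mathbb Z\Gamma$-modules
\[
\pi_2 X \oplus \mathbb Z\Gamma \;\cong\; \mathbb Z\Gamma \oplus \mathbb Z\Gamma.
\]
In particular $\pi_2 X$ is stably free and is generated by two elements (two generators suffice because the right-hand side has two generators and stably free modules of this shape are generated by any set of generators of a stabilization, or more directly because $\pi_2 X \cong \ker\partial_2$ sits inside $\mathbb Z\Gamma^2$). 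Moreover $\mathbb Z\Gamma^2 / \pi_2 X \cong \operatorname{im}\partial_2$ is a submodule of $\mathbb Z\Gamma^{c_1}$, hence torsion-free.

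Now I would apply Corollary~\ref{freez} with $M = \pi_2 X \subseteq \mathbb Z\Gamma^2$: it is generated by a pair of vectors and the quotient is torsion-free, so $M = \pi_2 X$ is free. Being free and stably free with $\pi_2 X \oplus \mathbb Z\Gamma \cong \mathbb Z\Gamma^2$ forces $\pi_2 X \cong \mathbb Z\Gamma$ (compare ranks after tensoring to $\mathbb Z$, or note a free module that becomes $\mathbb Z\Gamma^2$ after adding $\mathbb Z\Gamma$ must be $\mathbb Z\Gamma$). Finally, knowing $\pi_2 X$ is free and the complex $X$ has the homotopy type dictated by its chain complex over the $2$-dimensional aspherical base $\Sigma$, a standard argument (attach the $2$-cell corresponding to a free generator, or directly build a homotopy equivalence cell-by-cell using that $H_*(\widetilde X)$ is concentrated in degrees $0$ and $2$ with $H_2$ free) shows $X \simeq \Sigma \vee S^2$. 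I expect the main obstacle to be the last step: verifying cleanly that freeness of $\pi_2 X$ together with the given Euler characteristic upgrades to an actual homotopy equivalence $X \simeq \Sigma \vee S^2$ rather than merely a chain homotopy equivalence — this requires invoking that $2$-complexes with surface fundamental group and free $\pi_2$ of this rank are classified by their chain complexes (again via Louder, or via the fact that $\Sigma$ is aspherical so $X$ is determined by $\pi_2 X$ as a $\mathbb Z\Gamma$-module together with the $k$-invariant, which vanishes here), and I would need to make sure the genus hypothesis $\geq e^{1000000}$ is exactly what feeds Corollary~\ref{freez}.
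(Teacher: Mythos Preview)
Your proposal follows the paper's second proof (Section~\ref{topology}) almost exactly: reduce to the non-minimal Euler characteristic case, invoke Louder to get $X\vee S^2\simeq\Sigma\vee S^2\vee S^2$, deduce $\pi_2X\oplus\mathbb Z\Gamma\cong\mathbb Z\Gamma^2$, apply Corollary~\ref{freez} to conclude $\pi_2X$ is free, and finish. Two small points: your ``more directly'' reason that $\pi_2X$ is $2$-generated (merely sitting inside $\mathbb Z\Gamma^2$) is not valid, but your first reason (it is a summand, hence a quotient, of $\mathbb Z\Gamma^2$) is; and the final step you flag as an obstacle is handled cleanly in the paper (proof of the third bullet of Corollary~\ref{2relcor}) by starting with a $\pi_1$-isomorphism $\Sigma\to X$ and extending it to a homotopy equivalence $\Sigma\vee S^2\to X$ by sending the sphere to a free generator of $\pi_2X$ --- no $k$-invariant bookkeeping is needed.
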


\subsection*{On $2$-complexes with more $2$-cells}
Let us finish this introduction with several remarks about generalizations to $2$-complexes with more than two $2$-cells. 

For the torus group $\mathbb Z^2$ not every submodule of a free $\mathbb Z[\mathbb Z^2]$-module is free, but all the stably-free ones are (this is Serre's conjecture proved by Quillen and Suslin, see \cite{lam}), and this is all one needs to show that any $2$-complex with $\mathbb Z^2$ fundamental group is standard. 
For the free groups $F_m$, there is a generalization of Euclid's algorithm (also due to Cohn) which shows that any ideal in $\mathbb QF_m$ (on any finite number of generators) is free. It also works with coefficients in $\mathbb F_p$ instead of $\mathbb Q$ so Bass's theorem implies any stably free $\mathbb ZF_n$-module is free. This implies all finite $2$-complexes with free fundamental group are standard. (See \cite{hogangeloni}.) 

On the other hand, the fundamental group of an orientable genus $g$ surface does have a non-free ideal on $2g$ generators, namely its augmentation ideal. It is tempting to conjecture that any ideal on fewer than $2g$ generators is free. Let us only remark here that Cohn's generalized Euclid's algorithm also has a surface version, which shows that any ideal on $f(g)$ generators is free, where $f(g)$ is a function such that $f(g)\ra\infty$ as $g\ra\infty$. The details of this are more involved than the $2$-generator case and will (?) be the subject of a future paper. Combining such an algorithm with Bass's method and Louder's result would imply that $2$-complexes with $\Sigma_g$ fundamental group and $\leq f(g)$ $2$-cells are standard.   

\subsection*{Plan of the paper} We explain the division algorithm for free groups in Section \ref{freegroupalg}. In Section \ref{deltahyp} we recall and derive properties of hyperbolic space that will be used in the proof of Theorem \ref{surfacealg} (the division algorithm for surface groups), which is given in Section \ref{surfacealgproof}. We then give a proof of Euclid's algorithm for surface groups together with Corollaries \ref{freefield} and \ref{freez} in Section \ref{algebra}. The group theoretic application (Corollary \ref{2rel}) and one way to get Theorem \ref{standard} is proved in Section \ref{2relator} and the other way is given in Section \ref{topology}.

\subsection*{Notation and terminology} Before we start, let us fix some notation that will be used throughout the paper and describe how group ring elements can, to a large extent, be thought of geometrically. 

Throughout the paper $\Gamma$ will denote either a free group or a surface group. The group $\Gamma$ acts by a covering space action on a space $Y$, which is a tree when $\Gamma$ is free and the hyperbolic plane $\mathbb{H}^2$ when $\Gamma$ is a surface group. Pick an orbit of $\Gamma$ in $Y$ and identify group elements with points of that orbit in $Y$. A group ring element $x\in\mathbb Q\Gamma$ is a finite formal linear combination $x=\sum x_{\gamma}\cdot \gamma$.

\subsubsection*{Support}
The support of $x$ consists of all the group elements $\gamma$ with non-zero coefficients $x_{\gamma}$ appearing in this sum, thought of as points in $Y$. We will denote the support of an element by the corresponding capital letter. So, the support of $x$ will be denoted $X$.

\subsubsection*{Diameter}
The diameter of $X$ is the maximal distance between a pair of points in $X$. It will be denoted $|x|$ (or $|X|$), and we will also call it the diameter of $x$. 

\subsubsection*{Barycenter}
The barycenter of $X$ is the center of the smallest ball containing $X$. It will be denoted $\widehat x$ (or $\widehat X$), and we will simply call it the barycenter of $x$. 

\subsubsection*{Boundary points}
Let $B_{\widehat x}(R)$ be the smallest ball containing $X$. We will call points of $X$ that are a maximal distance $R$ from the barycenter the boundary points of $x$.


\subsection*{Acknowledgements} I would like to thank T$\hat{\mbox{a}}$m Nguy$\tilde{\hat{\mbox{e}}}$n Phan for suggesting writing section \ref{freegroupalg}, Ian Leary and Jean Pierre Mutanguha for pointing out some $3$-dimensional $2$-relator groups, and the Max Planck Institute for its hospitality and financial support. 





\section{\label{freegroupalg}The division algorithm for free groups}
In this section, we will sketch the division algorithm for free groups. 

\subsection*{The algorithm}
Suppose we have a pair of group ring elements $x$ and $y$ that are related by a nontrivial linear relation $ax+by=0$. The main step in the division algorithm is to show that if $|x|\geq|y|$ then we can subtract translates of $y$ from $x$ to obtain an element $x_1=x-c_1y$ whose diameter is strictly smaller than that of $x$. Iterating this step will give division (we will say a few more words about this iteration at the end of this subsection.) 

The choice of $c_1$ is dictated by the relation $ax+by=0$ as follows. Let $o$ be the barycenter of the support of $ax$ and $R$ the radius of the smallest ball containing this support. 
There is an $x$-translate $\gamma x$ with $a_{\gamma}\not=0$ that contains a boundary point of $ax$. We can assume that $\gamma=1$, so that $x$ contains a boundary point. (If $\gamma\not=1$, multiply the relation on the left with $\gamma^{-1}$ and start again.) Let us call the points of $x$ that are on the boundary of $ax$ the {\it extremal points of} $x$. 
\begin{figure}[h!]
\centering
\includegraphics[scale=0.13]{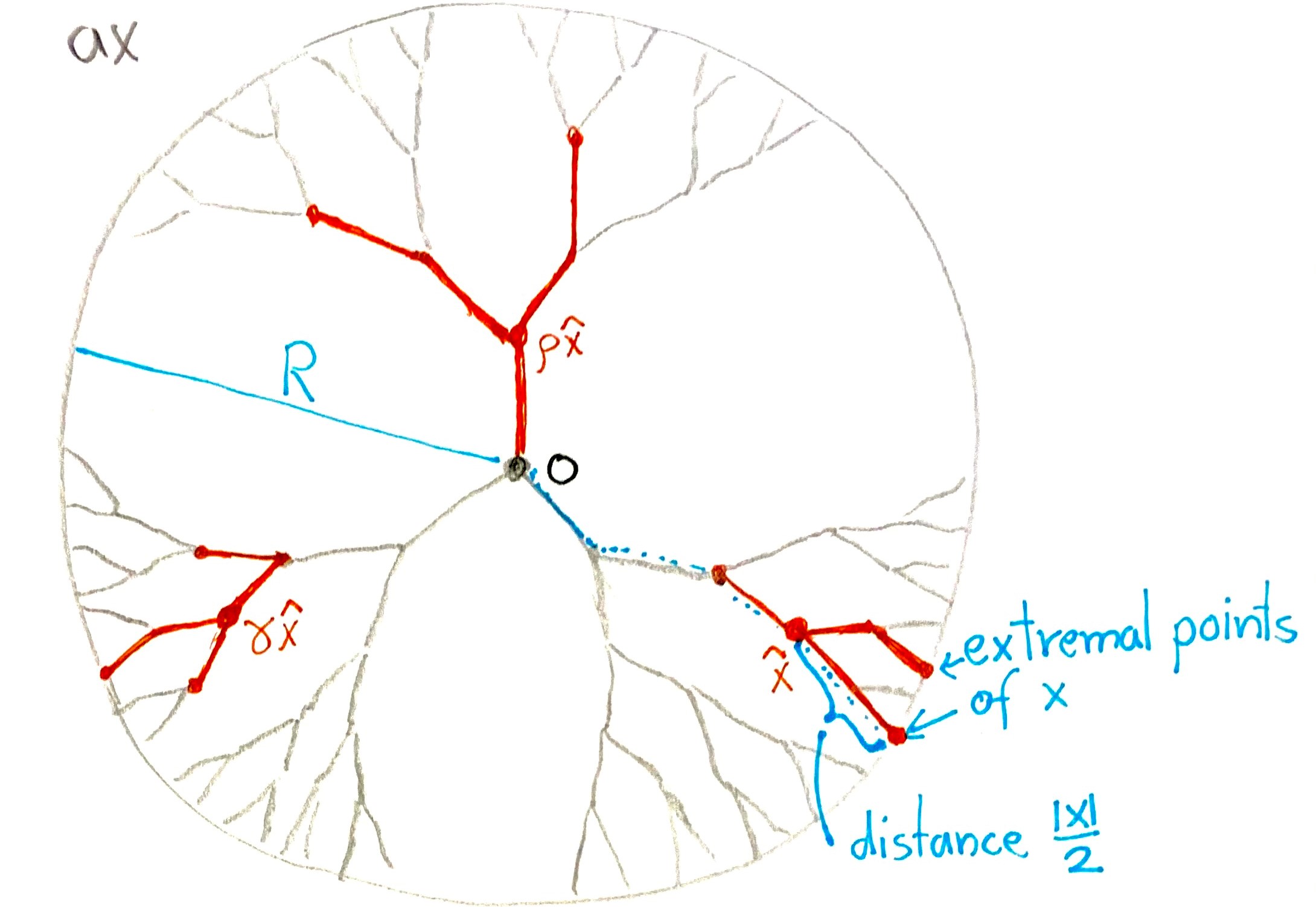}
\end{figure}
\newline

\noindent
{\bf Claim 1:} Any boundary point of $ax=-by$ appears in a unique $x$-translate (that is, $\gamma x$ with $a_{\gamma}\not=0$) and also in a unique $y$-translate ($\rho y$ with $b_{\rho}\not=0$.)
\newline

Therefore, the extremal points of $x$ can all be canceled by $y$-translates (weighted with appropriate coefficients)\footnote{The coefficients are $c_{\gamma}=-b_{\gamma}/a_1$ if $\gamma y$ contains an extremal point of $x$, and $c_{\gamma}=0$ otherwise.} to obtain an element 
$$
x_1=x-\sum c_{\gamma}\gamma y
$$ 
whose support does not contain any of the extremal points from the support of $x$. 
\newline

\noindent
{\bf Claim 2:} $x_1$ has smaller diameter than $x$. 
\newline

If $|x_1|<|y|$ then this finishes the division algorithm, since we can take $x_1$ to be the remainder. If not, then we note that $x_1$ and $y$ are related by the non-trivial relation $ax_1+(b+ac_1)y=0$ and repeat the above argument. Each iteration decreases the diameter by at least one, so after finitely many steps we arrive at an element $x_n=x_{n-1}-c_ny=x-(c_1+\dots+c_n)y$ whose diameter is smaller than $y$. This is our remainder.

\subsection*{Why it works} The key behind everything is that we are on a tree. 

Denote the support of $x$ by the corresponding capital letter $X$. We look at the set 
$$
S=\bigcup_{a_{\gamma}\not=0}\gamma X.
$$ 
It contains the support of $ax$ but can be strictly bigger if $ax$ has some cancellation. Let $B_{o'}(R')$ be the smallest ball containing $S$. We will show that any point in $S\cap S_{o'}(R')$ is in the support of $ax$. For this, it is enough to show that any $p\in S\cap S_{o'}(R')$ lies in precisely one $X$-translate. 
\begin{proof}
If $\gamma X$ touches the boundary at $p$ then, since we are on a tree, the barycenter $\gamma\widehat x$ lies on the geodesic from $o'$ to $p$ and is precisely $|X|/2$ away from $p$. If there is another translate $\rho X$ containing $p$ then $\gamma\widehat x=\rho\widehat x$ and hence $\gamma=\rho$. So, the translates $\rho X$ and $\gamma X$ are the same. 
\end{proof}
\begin{figure}[h!]
\centering
\includegraphics[scale=0.13]{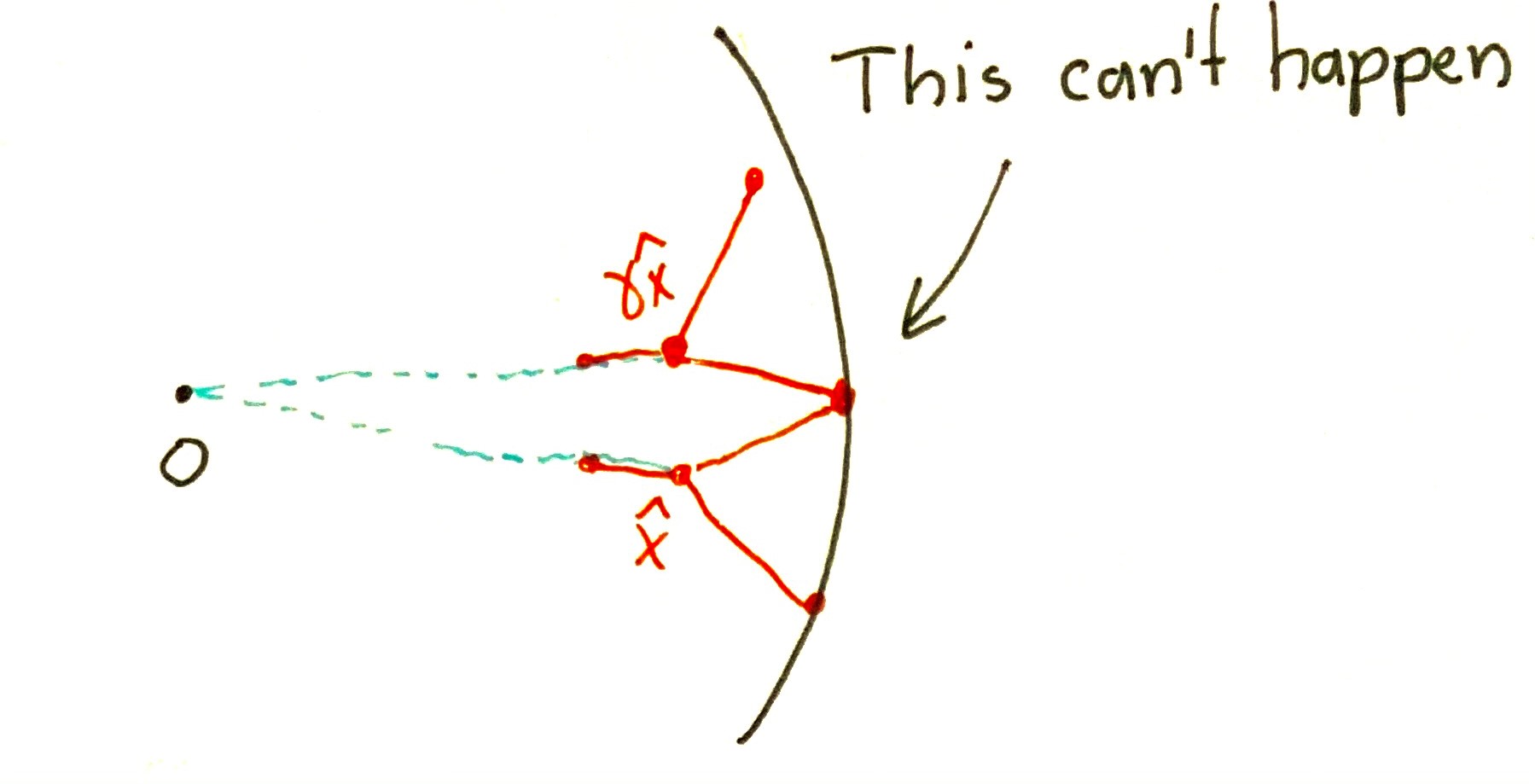}
\end{figure}

It follows from this that the points $S\cap S_{o'}(R')$ all appear in the support of $ax$. Therefore $o=o', R=R'$, what we have called above the `boundary points of $ax$' are pIn other words, therecisely the set $S\cap S_{o}(R)$, and every boundary point of $ax$ appears in exactly one $x$-translate. All the same arguments apply to the expression $by$. This proves the first claim.
\begin{remark}
It also shows that the picture of $ax$ on the previous page is accurate: The minimal ball containing the support of $ax$ entirely contains the supports of all the $x$-translates $\{\gamma x\}_{a_{\gamma}\not=0}$. 
\end{remark}
 
To prove the second claim, one uses similar arguments to show (see figures on the next page) that all the points of $x_1$ are $\leq|x|/2$ away from the barycenter $\widehat x$ and are not extremal.  Thus, $|x_1|\leq|x|$. In the case of equality there is a diameter realizing segment in $x_1$ whose midpoint is $\widehat x$. But then, at least one of its endpoints is extremal, which is a contradiction. 

\begin{figure}[h!]
\centering
\includegraphics[scale=0.13]{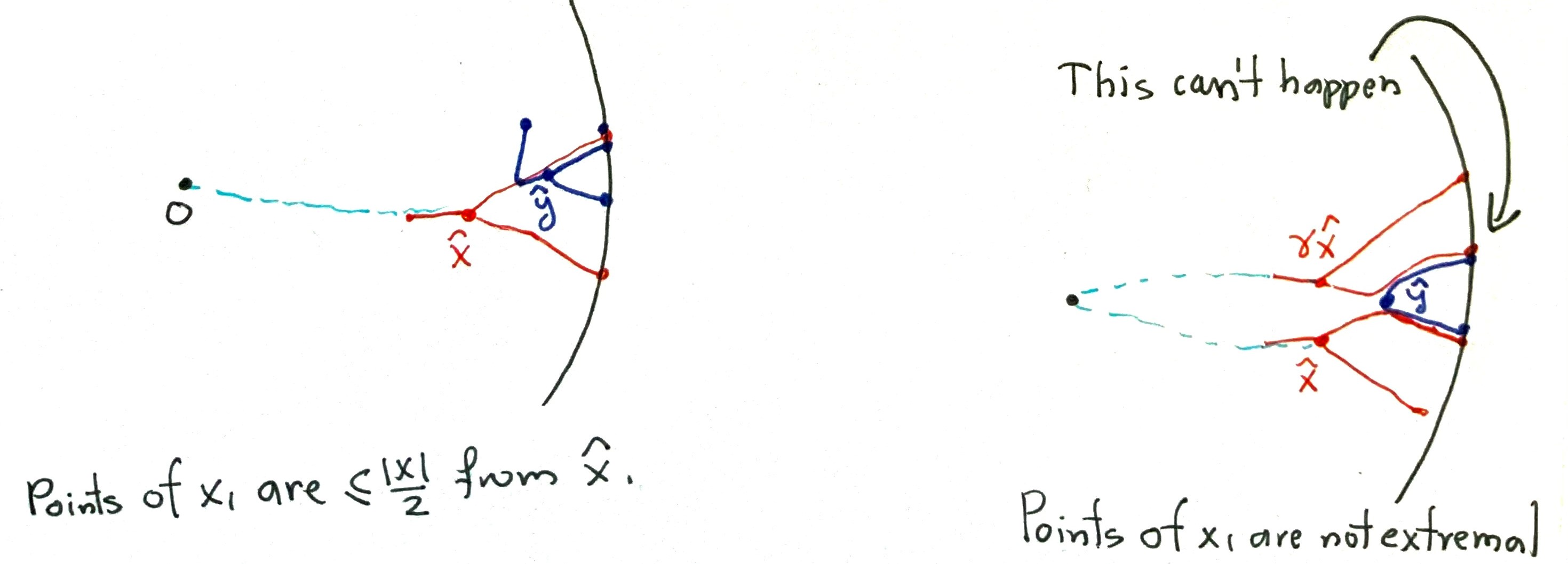}
\end{figure}

\subsection*{Where do relations come from?}
We can work backwards, starting from an element $z$ to produce pairs of elements satisfying successively more complicated relations: $(z,0)\ra(z,az)\ra(z+baz,az)\ra(z+baz,az+cz+cbaz)\ra\dots$. What the division algorithm implies is that any pair s of theatisfying a relation is obtained by this process. 


\section{\label{deltahyp}Tree-like properties of hyperbolic space}
Our proof of the division algorithm for surface groups is based on the tree-like properties of hyperbolic space. In this section we recall these properties in a convenient form and derive some specific consequences that will be used in the proof.

\subsection{$\delta$-hyperbolicity}
Everything can be easily obtained from the following basic property.
\begin{itemize}
\item
There is a universal constant $\delta$ so that if $pq$ is a segment with midpoint $m$ and $o$ is any point in hyperbolic space, then one of the paths $omp$ or $omq$ cannot be shortened by more than $\delta$.
In symbols
\begin{equation*}
\label{hyp}
\max(d(o,p),d(o,q))\geq d(o,m)+{1\over 2}d(p,q)-\delta.
\end{equation*}
\end{itemize}
\begin{remark}
In a tree we can take $\delta=0$ and in hyperbolic space we can take $\delta=5$. 
\end{remark}
It is useful to note that one of the angles $\angle_m(o,p)$ or $\angle_m(o,q)$ is obtuse ($\geq\pi/2$), and the maximum is achieved for the endpoint corresponding to this obtuse angle. It follows that 
\begin{itemize}
\item
any geodesic segment connecting a sphere $S_o(R)$ to a larger concentric sphere $S_o(R')$ and not intersecting the interior of $B_o(R)$ has length between $|R'-R|$ and $|R'-R|+\delta$.
\end{itemize} 
\begin{figure}[h!]
\centering
\includegraphics[scale=0.16]{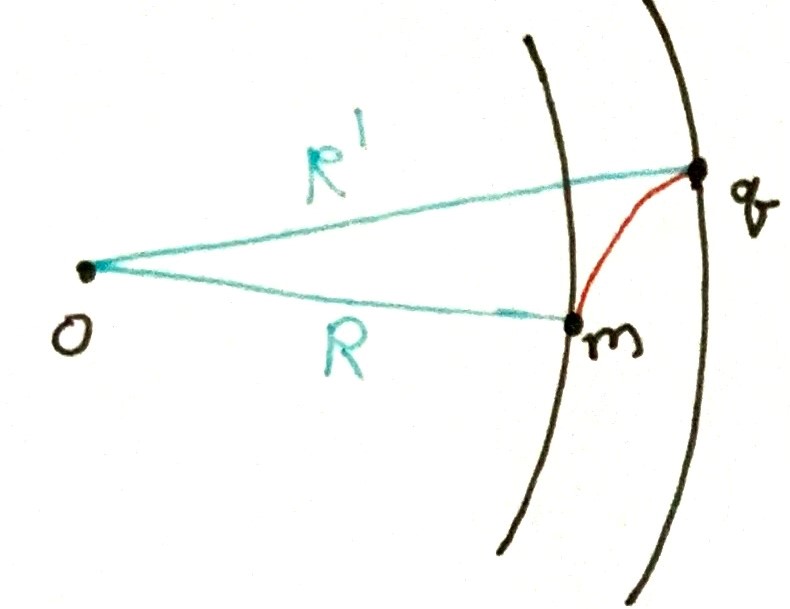}
\end{figure}

\begin{proof}
Let $m$ be a point on $S_o(R)$ and $q$ a poin of thet on $S_o(R')$. The angle $\angle_m(o,q)$ is obtuse, so $d(o,q)\geq d(o,m)+d(m,q)-\delta$. Plugging in $d(o,q)=R'$ and $d(o,m)=R$ gives $d(m,q)\leq R'-R+\delta$. The other inequality $R'-R\leq d(m,q)$ is clear. 
\end{proof}

\subsection{Midpoints and barycenters}
A consequence of $\delta$-hyperbolicity is that if $pq$ is a length $L$ segment in an $R$-ball, then its midpoint $m$ is within $R-L/2+\delta$ of the center of the ball. 

\begin{proof}
Let $o$ be the center of the $R$-ball. Then $R\geq\max(d(o,p),d(o,q))\geq d(o,m)+L/2-\delta$. 
\end{proof}
Another consequence is that any set $X$ of diameter $D$ is contained in a $(D/2+\delta)$-ball.
\begin{proof}
Let $p,q$ be a pair of points realizing the diameter $D$ and let $m$ be their midpoint. If $o$ is any point in $X$ then $D\geq \max(d(o,p),d(o,q))\geq d(o,m)+D/2-\delta$ implies $d(o,m)\leq D/2+\delta$. In other words, $X$ is contained in the $D/2+\delta$ ball centered at $o$.  
\end{proof}
These two properties together imply that 
\begin{itemize}
\item
the barycenter of a set is $2\delta$-close to the midpoint of any segment realizing the diameter.
\end{itemize}
So we can replace one with the other at the expense of a small error.
\newline

Next, suppose that $X$ is a set of diameter $D$, $\widehat x$ is its barycenter and $o$ is a point. 
Then for any diameter realizing segment $pq$ of $X$ with midpoint $m$ we have 
\begin{eqnarray}
\max(d(o,p),d(o,q))&\geq& d(o,m)+{D\over 2}-\delta\\
\label{top}
&\geq& d(o,\widehat x)+{D\over 2}-3\delta.
\end{eqnarray}
For any point $p'$ in $X$ we have $d(o,p')\leq d(o,\widehat x)+d(\widehat x,p')\leq d(o,\widehat x)+D/2+\delta$ and therefore
\begin{equation}
\label{bottom}
d(o,\widehat x)\geq d(o,p')-{D\over 2}-\delta, 
\end{equation}
 
Putting these two inequalities together tells us how far the barycenter $\widehat x$ is from a point $o$ in terms of the diameter of $X$ and the radius of the smallest ball at $o$ containing $X$.
\begin{lemma}
If $B_o(R)$ is the smallest ball centered at $o$ containing $X$, then 
$$
R-{D\over 2}-\delta\leq d(o,\widehat x)\leq R-{D\over 2}+3\delta.
$$
\end{lemma}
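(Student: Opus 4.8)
The plan is to obtain both bounds as immediate consequences of the two inequalities \eqref{top} and \eqref{bottom} established just above, combined with the single observation that, since $B_o(R)$ is the \emph{smallest} ball centered at $o$ containing $X$, the radius is exactly $R=\max_{p'\in X}d(o,p')$. In particular every point of $X$ lies within $R$ of $o$, and there is at least one point $p'_0\in X$ with $d(o,p'_0)=R$.

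For the lower bound I would apply \eqref{bottom} with $p'=p'_0$, which gives $d(o,\widehat x)\geq d(o,p'_0)-D/2-\delta=R-D/2-\delta$. For the upper bound I would pick any geodesic segment $pq$ realizing the diameter $D$ of $X$ (such a segment exists since $X$ is a nonempty bounded set of diameter $D$); its endpoints lie in $X$, so $\max(d(o,p),d(o,q))\leq R$, and feeding this into \eqref{top} yields $R\geq d(o,\widehat x)+D/2-3\delta$, i.e. $d(o,\widehat x)\leq R-D/2+3\delta$. Together these are precisely the two asserted inequalities.

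I do not expect any genuine obstacle here, since the statement is simply the conjunction of \eqref{top} and \eqref{bottom} after substituting the extremal values of $d(o,\cdot)$ on $X$. The only point meriting a word of care is that \eqref{top} was phrased for an arbitrary diameter-realizing segment, so one should note at the outset that such a segment exists; everything else is a one-line rearrangement.
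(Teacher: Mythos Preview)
Your proposal is correct and matches the paper's own proof essentially line for line: the paper simply says to plug $d(o,p')=R$ into \eqref{bottom} and $\max(d(o,p),d(o,q))\leq R$ into \eqref{top}, which is exactly what you do. Your extra remarks about the existence of a point realizing $R$ and of a diameter-realizing segment are reasonable clarifications but add nothing new.
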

\begin{proof}
Plug $d(o,p')=R$ into (\ref{bottom}) and $\max (d(o,p),d(o,q))\leq R$ into (\ref{top}). 
\end{proof}

\subsection*{Shrinking the diameter of $X$}
Another application of these two inequalities specifies particular points of $X$ to throw out in order to shrink its diameter.
\begin{lemma}[Extremal cancellation]
\label{5delta}
If $B_o(R)$ is the smallest ball centered at $o$ containing $X$, then the diameter of $X\cap B_o(R-5\delta)$ is strictly less than the diameter of $X$.
\end{lemma}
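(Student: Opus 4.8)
The plan is to show that if $p$ is a point of $X$ realizing the diameter $D$ together with some other point $q\in X$, then $p$ cannot lie in the smaller ball $B_o(R-5\delta)$; this immediately gives that $X\cap B_o(R-5\delta)$ omits at least one endpoint of every diameter-realizing segment, so its diameter drops below $D$. First I would invoke the previous lemma, which gives $d(o,\widehat x)\leq R-D/2+3\delta$. Then for the point $q$ paired with $p$, apply the $\delta$-hyperbolicity inequality (\ref{top}) to the segment $pq$ with its midpoint: since $\max(d(o,p),d(o,q))\geq d(o,\widehat x)+D/2-3\delta$ is the wrong direction, instead I would argue directly that one of $d(o,p),d(o,q)$ is large.

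More carefully, here is the argument I would write. Let $pq$ be a diameter-realizing segment of $X$ with midpoint $m$. By $\delta$-hyperbolicity, $\max(d(o,p),d(o,q))\geq d(o,m)+D/2-\delta$. We also have $d(o,m)\geq 0$, but that is too weak; instead use that $p,q\in X\subseteq B_o(R)$ forces $d(o,m)\geq$ something only via a lower bound on how close $m$ can be to $o$. The cleaner route: combine the two displayed inequalities (\ref{top}) and (\ref{bottom}) the way the preceding lemma does. From (\ref{bottom}) applied to $p'=p$ (a boundary point, so $d(o,p)$ could be as large as $R$) we don't get a lower bound on $d(o,p)$ directly. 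So the honest approach is: suppose for contradiction that \emph{both} endpoints $p,q$ of \emph{some} diameter-realizing segment lie in $B_o(R-5\delta)$. Then $\max(d(o,p),d(o,q))\leq R-5\delta$. But by (\ref{top}), $\max(d(o,p),d(o,q))\geq d(o,\widehat x)+D/2-3\delta$, and by the preceding lemma $d(o,\widehat x)\geq R-D/2-\delta$, so $\max(d(o,p),d(o,q))\geq R-4\delta$. This contradicts $\max(d(o,p),d(o,q))\leq R-5\delta$. Hence at least one endpoint of every diameter-realizing segment of $X$ lies outside $B_o(R-5\delta)$, so that endpoint is removed in passing to $X\cap B_o(R-5\delta)$, and therefore no diameter-realizing segment survives, giving $|X\cap B_o(R-5\delta)|<D$.

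The only subtlety, and the step I'd be most careful about, is making sure the chain of inequalities has the right constants and directions: we need the \emph{lower} bound $d(o,\widehat x)\geq R-D/2-\delta$ from the lemma (not the upper bound), combined with inequality (\ref{top}) which lower-bounds $\max(d(o,p),d(o,q))$ in terms of $d(o,\widehat x)$; the two $\delta$-errors ($-\delta$ from the lemma and $-3\delta$ from (\ref{top})) add up to $-4\delta$, which is why the $5\delta$ in the statement leaves exactly a $\delta$ of room and the contradiction is strict. If one wanted a sharper constant one could trace through whether the $3\delta$ in (\ref{top}) can be improved, but $5\delta$ is a safe and clean choice and that is what I would record. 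No finer hyperbolic geometry is needed — this is purely a bookkeeping consequence of the two inequalities already established in this section.
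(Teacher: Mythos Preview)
Your proposal is correct and follows essentially the same route as the paper: assume a diameter-realizing segment $pq$ of $X$ has both endpoints in $B_o(R-5\delta)$, combine inequality (\ref{top}) with the lower bound $d(o,\widehat x)\geq R-D/2-\delta$ from the preceding lemma to get $\max(d(o,p),d(o,q))\geq R-4\delta$, and contradict. The paper's writeup is terser (it phrases the contradiction as ``if the diameter of $X\cap B_o(R-5\delta)$ is not smaller, a diameter-realizing segment of it is also one of $X$''), but the inequalities and constants are identical to yours.
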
  
\begin{proof}
If the diameter of $X\cap B_o(R-5\delta)$ is not smaller, one of its diameter realizing segments $pq$ also realizes the diameter of $X$. Therefore, plugging (\ref{bottom}) into (\ref{top}) and using $R=d(o,p')$ gives $\max(d(o,p),d(o,q))\geq R-4\delta$, so at least one of the points $p$ or $q$ is outside the $R-5\delta$ ball centered at $o$, which is a contradiction. 
\end{proof}

\newpage
\subsection{Fellow traveling}
The next treelike feature of hyperbolic space we need is {\it fellow traveling}. It says that for a pair of points $p$ and $q$ on the boundary of a ball centered at $o$, the segments $pq$ and $po$ fellow travel until we reach the midpoint of $pq$, up to an error $4\delta$. To express it precisely, it is useful to parametrise geodesics. For a geodesic segment $pq$ we denote by $pq(t)$ the point obtained by traveling from $p$ to $q$ for a time $t$ along the geodesic.

\begin{lemma}[Fellow traveling property]
For a pair of points $p,q\in S_o(R)$ and $t\leq{d(p,q)\over 2}$ we have
$$
d(pq(t),po(t))\leq 4\delta.
$$
\end{lemma}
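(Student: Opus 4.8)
The plan is to deduce the fellow traveling property directly from the basic $\delta$-hyperbolicity inequality, applied to the right auxiliary triangle. Fix $p,q\in S_o(R)$, let $m$ be the midpoint of $pq$, and let $t\leq d(p,q)/2$. Write $x=pq(t)$ and $y=po(t)$, so both lie at distance $t$ from $p$. The key observation is that the quadruple $(o,p,q)$ with the point $x$ on the side $pq$ is a thin configuration: since $\angle_m(o,p)$ or $\angle_m(o,q)$ is obtuse and $d(o,p)=d(o,q)=R$, symmetry forces the path $omp$ and the path $omq$ to each be within $\delta$ of efficient, i.e.\ $d(o,m)\geq R - d(p,m) - \delta = R - d(p,q)/2 - \delta$. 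So $m$ is essentially as far from $o$ as possible given that $pq$ has its endpoints on $S_o(R)$.

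First I would show $d(o,x)$ is large, close to $R$. Apply the $\delta$-hyperbolicity inequality to the segment from $p$ to $m$ (rather, to a segment whose midpoint sits appropriately) — more precisely, use the ``concentric spheres'' consequence derived earlier: the subsegment of $pq$ from $x$ to $m$ has length $d(p,q)/2 - t$, so $d(o,x) \geq d(o,m) - (d(p,q)/2 - t) \geq R - d(p,q)/2 - \delta - (d(p,q)/2 - t)$, which is weak. Better: apply $\delta$-hyperbolicity with $o$ as the external point to the segment $xq$ with its midpoint, or simply note $d(o,x)\ge d(p,o)-d(p,x)$ is useless in the wrong direction; instead bound $d(o,x)$ from below by observing that $x$ lies on a geodesic from $p$ (on the sphere) toward $m$ (nearly antipodal-far), so $d(o,x)\geq R - 2\delta - $ (something controlled). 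The cleanest route is: by $\delta$-hyperbolicity applied to midpoint $m'$ of $px$ relative to $o$, combined with $d(o,m)\ge R - d(p,q)/2-\delta$, one gets $d(o,x)\ge \min(d(o,p),d(o,m)) - C\delta$ after a short chase, hence $d(o,x)\geq R - d(p,q)/2 + t - C\delta$ for a small constant $C$.

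Then I would run the same argument on the path $po$ to control $y=po(t)$: trivially $d(o,y)=R-t$ exactly, since $y$ is on the geodesic $po$. Now I compare $x$ and $y$. Both are at distance $t$ from $p$. Consider the midpoint $n$ of the segment $xy$. Applying $\delta$-hyperbolicity to $xy$ with external point $p$: since $d(p,x)=d(p,y)=t$, we get $t=\max(d(p,x),d(p,y))\geq d(p,n)+ \tfrac12 d(x,y)-\delta$, so $d(x,y)\leq 2(t - d(p,n)) + 2\delta$. Applying it with external point $o$: $\max(d(o,x),d(o,y))\geq d(o,n)+\tfrac12 d(x,y)-\delta$. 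Using the lower bound on $d(o,x)$ from the previous step and $d(o,y)=R-t$, the left side is $\le R-t+C\delta$, while $d(o,n)\geq d(o,p)-d(p,n)=R-d(p,n)$; substituting yields $R-t+C\delta \geq R-d(p,n)+\tfrac12 d(x,y)-\delta$, i.e.\ $d(p,n)-t + (C+1)\delta\geq \tfrac12 d(x,y)$. Adding this to the bound $d(x,y)\le 2(t-d(p,n))+2\delta$ makes the $d(p,n)$ and $t$ terms cancel, leaving $d(x,y)\leq \mathrm{(const)}\cdot\delta$, and tracking constants carefully gives the stated $4\delta$.

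\textbf{Main obstacle.} The real work is in the bookkeeping of constants: the statement claims the sharp-looking bound $4\delta$, and the naive triangle chases above produce a larger multiple of $\delta$. Getting down to exactly $4\delta$ will require being economical — probably applying the basic inequality only twice (once with external point $p$, once with external point $o$) and exploiting that $d(o,y)=R-t$ and $d(p,y)=t$ hold \emph{exactly} (no $\delta$ loss) because $y$ is literally on the geodesic $po$. So the asymmetry between the ``genuine geodesic'' $po$ and the ``nearly geodesic'' path $p\to m\to o$ is what must be leveraged to avoid doubling the error. I expect the proof to set $u = d(p,q)/2$, note $d(o,m)\ge R-u-\delta$, and then for $t\le u$ compare directly using the inequality $\max(d(o,pq(t)),d(o,po(t)))\ge d(o,n)+\tfrac12 d(pq(t),po(t))-\delta$ together with the elementary fact $d(o,pq(t))\le R$ and a matching lower bound, squeezing $d(pq(t),po(t))$ into $[0,4\delta]$.
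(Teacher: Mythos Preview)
Your two-external-point strategy---apply the basic inequality to the segment $xy$ with external point $p$, then with external point $o$, and add to eliminate $d(p,n)$---is sound in outline and genuinely different from the paper's argument. But there is a real error in the execution: to cap $\max(d(o,x),d(o,y))$ from above you need an \emph{upper} bound on $d(o,x)$, not the lower bound you spend the first substantive paragraph chasing. The sentence ``using the lower bound on $d(o,x)$ \dots\ the left side is $\le R-t+C\delta$'' is incoherent as written, and the crude upper bound $d(o,x)\le R$ you mention at the end is not sharp enough (plugged into your scheme it only yields $d(x,y)\le t+2\delta$, useless for large $t$). What you actually need is $d(o,x)\le R-t+\delta$: apply the basic inequality to the segment from $p$ to $pq(2t)$, whose midpoint is exactly $x$ and whose endpoints both lie in $B_o(R)$ by convexity of balls. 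With that correction your two inequalities read $\tfrac12 d(x,y)\le t-d(p,n)+\delta$ and $\tfrac12 d(x,y)\le d(p,n)-t+2\delta$, and adding them gives $d(x,y)\le 3\delta$.

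For comparison, the paper takes a different route: it proves only the extreme case $t=d(p,q)/2$ directly---introducing the auxiliary midpoint $m'$ of the segment joining $m$ to $p'=po(d(p,q)/2)$ and squeezing $d(m',p')\le 2\delta$, hence $d(m,p')\le 4\delta$---and then disposes of all smaller $t$ in one line via convexity of the function $t\mapsto d(pq(t),po(t))$. The paper's reduction is more economical in its use of the basic inequality; your (corrected) approach treats all $t$ uniformly and avoids the convexity step at the cost of one extra application.
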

\begin{figure}[h!]
\centering
\includegraphics[scale=0.20]{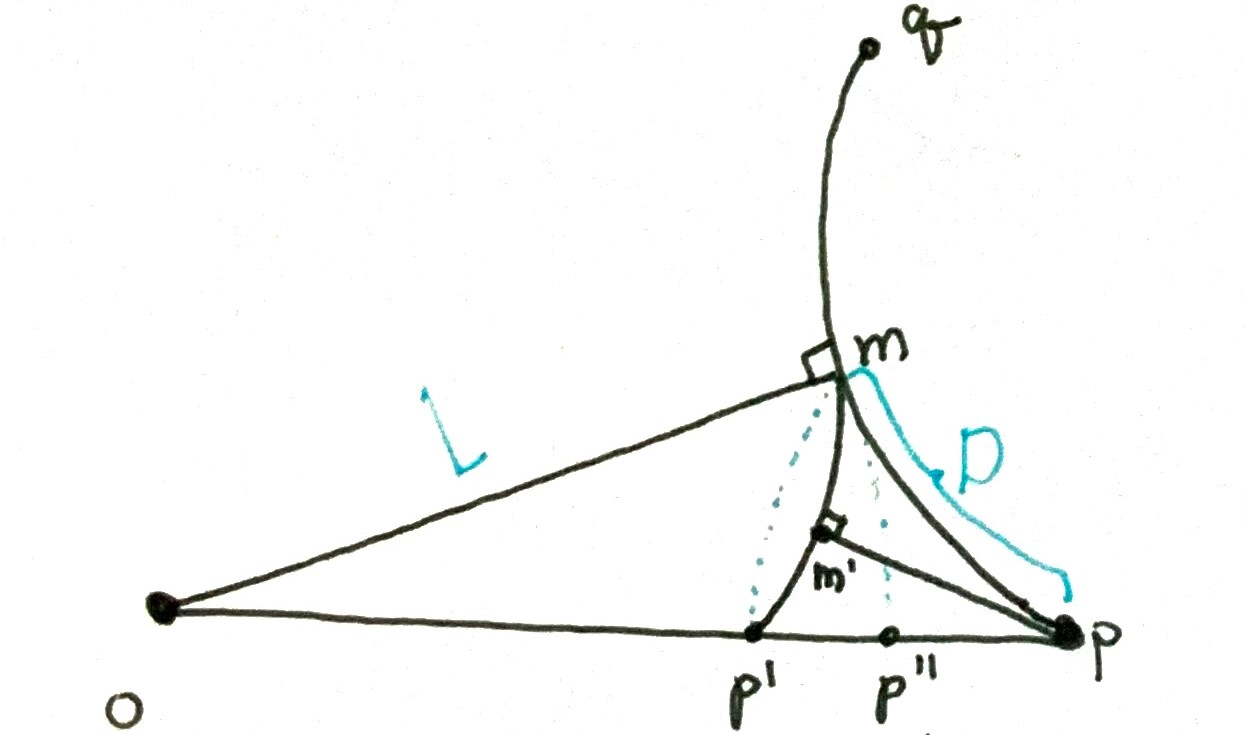}
\end{figure}

\begin{proof}
Let $m$ be the midpoint of $pq$, $L=d(o,m)$ and $D=d(m,p)$. Let $p'=po(D)$ be the point obtained by going for a time $D$ from $p$ to $o$ and $p''=op(L)$ the point obtained by traveling for a time $L$ from $o$ to $p$. Finally, let $m'$ be the midpoint of the geodesic segment $mp'$. Now, since the angle $\angle_m(o,p)$ is right, it follows that 
$$
R\geq L+D-\delta.
$$ 
It is also clear from the picture that 
$$
d(p,m')\geq d(p'',p)=R-L,
$$ 
and plugging in the previous inequality gives
$$
d(p,m')\geq D-\delta. 
$$
Since the angle $\angle_{m'}(p,p')$ is right, if follows that 
$$
D\geq d(p,m')+d(m',p')-\delta.
$$ 
Therefore 
$$
d(m',p')\leq D+\delta-d(p,m')\leq 2\delta.
$$
Since $m'$ is the midpoint of $mp'$, it follows that $d(m,p')\leq 4\delta$. This proves the lemma for $t=d(p,q)/2$. The lemma for smaller values of $t$ follows from convexity. 
\end{proof}

\subsection*{Large infimum displacement implies no zero divisors in the group ring}
Next, we give a key application of fellow traveling. It\footnote{To be more precise, the $\mu=0$ case in the setting of $\delta$-hypebolic groups.} was observed by Delzant in \cite{delzant} and is the main step in his proof that group rings of some hyperbolic groups have no zero divisors.

\begin{lemma}[Delzant]
\label{delzant}
Suppose $\gamma$ is an isometry of $\mathbb H^n$. If $X$ and $\gamma X$ are contained in a ball $B_o(R)$ and their intersection contains a point $p$ in the $\mu$-neighborhood of the boundary of the ball, then the midpoint $m$ of the segment from $p$ to $\gamma p$ is moved $\leq\mu+9\delta$ by $\gamma^{-1}$.
\end{lemma}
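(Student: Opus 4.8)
The plan is to use fellow traveling to locate both $p$ and $\gamma p$ near geodesics emanating from $o$, and then compare how $\gamma^{-1}$ acts on the midpoint $m$. First I would fix a point $q\in X$ realizing the diameter of $X$ together with $p$; since $X\subseteq B_o(R)$ and $p$ is within $\mu$ of the boundary sphere, $p$ is a near-boundary point, and the segment $op$ has length $\geq R-\mu$. The same applies to $\gamma X\ni\{p,\gamma p\}$: since $\gamma$ is an isometry, $d(\gamma p,\gamma q)=d(p,q)=|X|$, and both $\gamma p,\gamma q$ lie in $B_o(R)$ with $p=\gamma(\gamma^{-1}p)\in\gamma X$ near the boundary. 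So I would like to say that along the segment $p\,\gamma p$, starting from $p$, the initial portion fellow travels (within $4\delta$) the segment $po$, and starting from $\gamma p$ the initial portion fellow travels $\gamma p\,o$ — provided $p$ and $\gamma p$ are both close enough to $S_o(R)$ to invoke the Fellow traveling property.

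The key step is then: the midpoint $m$ of $p\,\gamma p$ is at distance $\leq R-\tfrac{1}{2}d(p,\gamma p)+(\text{small error})$ from $o$, by the midpoints-in-a-ball estimate, and simultaneously $m$ is $4\delta$-close to the point on $po$ at parameter $\tfrac12 d(p,\gamma p)$ from $p$. Now apply $\gamma^{-1}$: it sends $p\mapsto\gamma^{-1}p$ and $\gamma p\mapsto p$, so it sends the segment $p\,\gamma p$ to the segment $\gamma^{-1}p\; p$, and sends $m$ to the midpoint $m'$ of that segment. The point $\gamma^{-1}p$ lies in $\gamma^{-1}X$; I need $\gamma^{-1}X$ also contained in a ball whose boundary $p$ is close to. This follows because $X$ and $\gamma X$ lie in $B_o(R)$ with common near-boundary point $p$, so applying $\gamma^{-1}$: $\gamma^{-1}X$ and $X$ lie in $B_{\gamma^{-1}o}(R)$ with common point $\gamma^{-1}p$ near its boundary — but I actually want information about $p$ relative to $\gamma^{-1}X$, which I get directly: $p\in\gamma X$ means $\gamma^{-1}p\in X\subseteq B_o(R)$, and $p\in X\subseteq B_o(R)$. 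So both segments $p\,\gamma p$ and $\gamma^{-1}p\;p$ have their endpoints controlled, and their midpoints $m$ and $m'$ are each located, via fellow traveling along $po$ and along a controlled geodesic, at comparable distance (parameter $\tfrac12 d(p,\gamma p)$, using $d(\gamma^{-1}p,p)=d(p,\gamma p)$) from $p$.

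The heart of the argument is to show $d(m,m')\leq\mu+9\delta$: both $m$ and $m'$ are within $4\delta$ of the point $po(t_0)$ where $t_0=\tfrac12 d(p,\gamma p)$ — $m$ by fellow traveling of $p\,\gamma p$ with $po$, and $m'$ by fellow traveling of $p\,\gamma^{-1}p$ with $po$, once one checks $\gamma^{-1}p$ is also near $S_o(R)$. Indeed $\gamma^{-1}p\in X\subseteq B_o(R)$, and $d(o,\gamma^{-1}p)\geq d(o,p)-\text{(diam adjustments)}$ — more carefully, since $p$ and $\gamma^{-1}p$ both lie in $B_o(R)$ and the triangle inequality with the diameter bound on $X$ forces $\gamma^{-1}p$ to also be near the boundary when $p$ is. Combining the two $4\delta$ estimates gives $d(m,m')\leq 8\delta$, and folding in the $\mu$-slack (the points are only within $\mu$ of the boundary, not exactly on it, which costs an extra $\mu$ and one more $\delta$ in the fellow traveling input, since the lemma was stated for points exactly on $S_o(R)$) yields the claimed bound $\mu+9\delta$.

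\textbf{Main obstacle.} The delicate point is bookkeeping the error terms: the Fellow traveling lemma is stated for points \emph{on} the sphere $S_o(R)$, but here $p$ (and $\gamma^{-1}p$) are only within $\mu$ of it, and I must show the fellow-traveling conclusion degrades by only an additive $O(\mu+\delta)$ — this requires either a slightly more quantitative version of fellow traveling or a short perturbation argument pushing $p$ radially out to the sphere. Getting all three contributions ($\mu$ from near-boundary slack, and two separate $4\delta$'s from fellow traveling $m$ and $m'$ to a common reference point, with one $\delta$ of overhead) to add up to exactly $\mu+9\delta$ rather than something larger is the real content; everything else is a direct application of the inequalities already established in this section.
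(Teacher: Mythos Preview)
Your overall strategy is exactly the paper's: show that the midpoint $m$ of $[p,\gamma p]$ and its image $\gamma^{-1}m$ (which is the midpoint of $[p,\gamma^{-1}p]$, since $d(p,\gamma^{-1}p)=d(\gamma p,p)=L$) both lie close to a single reference point $p'$ on the geodesic $po$, then add. The gap is in your claim that $\gamma^{-1}p$ (and, implicitly, $\gamma p$) lies near $S_o(R)$. Your justification --- ``the triangle inequality with the diameter bound on $X$ forces $\gamma^{-1}p$ to also be near the boundary'' --- does not work: the lemma imposes no bound on $|X|$, and all we know is $\gamma p\in\gamma X\subset B_o(R)$ and $\gamma^{-1}p\in X\subset B_o(R)$. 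These points may sit deep inside the ball, so you cannot invoke the Fellow Traveling Lemma on $[p,\gamma p]$ or $[p,\gamma^{-1}p]$ directly. Your ``Main obstacle'' paragraph worries about $p$ being only $\mu$-close to the sphere, but the real obstruction is at the \emph{other} endpoint.

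The paper's fix is not to locate $\gamma p$ near the boundary but to \emph{shorten the segment}: move from $\gamma p$ toward $p$ a distance $\mu+\delta$ to a point $q$. Since $q$ is the midpoint of a length-$2(\mu+\delta)$ subsegment of $[p,\gamma p]\subset B_o(R)$, the basic $\delta$-hyperbolicity inequality gives $R\ge d(o,q)+(\mu+\delta)-\delta$, i.e.\ $q\in B_o(R-\mu)$, hence $d(o,q)\le R-\mu\le d(o,p)$. Now fellow traveling legitimately applies to $[p,q]$ and $[p,o]$ up to the midpoint $m'$ of $[p,q]$, giving $d(m',p')\le 4\delta$ with $p'=po\bigl(\tfrac{L-(\mu+\delta)}{2}\bigr)$; since $d(m,m')=\tfrac{\mu+\delta}{2}$, this puts $m$ within $\tfrac{\mu+\delta}{2}+4\delta$ of $p'$. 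The identical argument on $[p,\gamma^{-1}p]$ (same length, same $p'$) puts $\gamma^{-1}m$ within the same distance of $p'$, and the two bounds sum to $\mu+9\delta$.
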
 

\begin{figure}[h!]
\centering
\includegraphics[scale=0.13]{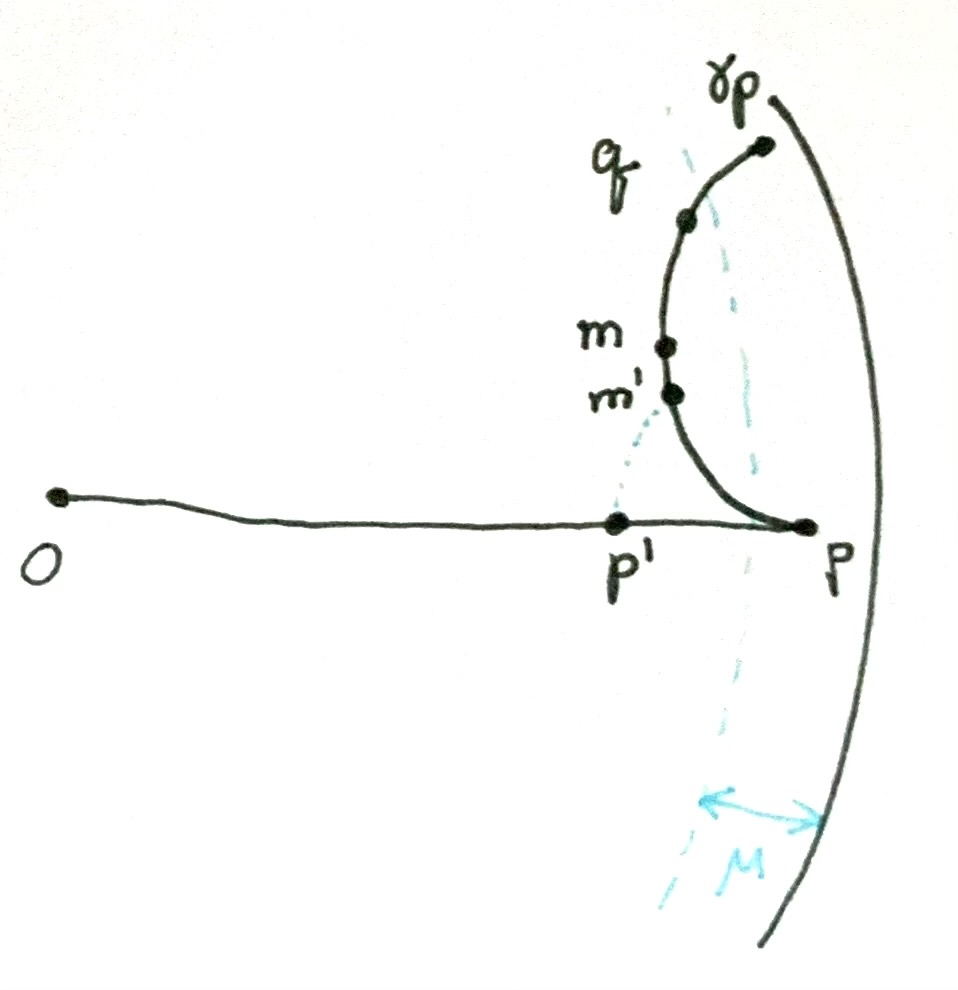}
\end{figure}

\begin{proof}
Let $L$ be the length of the segment from $p$ to $\gamma p$. Let $q$ be the point obtained by going from $\gamma p$ to $p$ for a distance $\mu+\delta$. Then $q\in B_o(R-\mu)$. Let $m'$ be the midpoint of the segment from $p$ to $q$. Then $po$ fellow travels with $pq$ for a time $t={L-(\mu+\delta)\over 2}$ until it reaches $m'$ so, if we denote by $p'=po(t)$ the point reached by traveling from $p$ to $o$ for a time $t$, then 
$$
d(m,p')\leq d(m,m')+d(m',p')\leq {\mu+\delta\over 2}+4\delta.
$$
The same argument applied to the segment from $p$ to $\gamma^{-1}p$ shows that its midpoint $\gamma^{-1}m$ satisfies $d(\gamma^{-1}m,p')\leq {\mu+\delta\over 2}+4\delta$. Therefore $d(m,\gamma^{-1}m)\leq \mu+9\delta$. 
\end{proof}

In other words, if the infimum displacement of $\Gamma$ acting on $\mathbb H^n$ is $>\mu+9\delta$, then $\Gamma$-translates of $X$ that lie in a ball do not intersect in the $\mu$-neighborhood of the boundary of that ball. 

Now we apply this to products in the group ring. The following corollary will be used repeatedly in the next section. It implies that any cancellation in a product $ax$ happens away from the boundary of $ax$, as long as the infimum displacement is sufficiently large. 
\begin{corollary}
\label{products}
Suppose $\Gamma$ has infimum displacement $>\mu+9\delta$. Let $a$ and $x$ be non-zero group ring elements.  Then, the smallest ball containing $ax$ also contains all the $x$-translates $\left\{\gamma x\right\}_{a_{\gamma}\not=0}$. Moreover, every point in the $\mu$-neighborhood of the boundary of this ball is contained in at most one such $x$-translate. 
\end{corollary}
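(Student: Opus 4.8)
The plan is to deduce Corollary~\ref{products} from Lemma~\ref{delzant} essentially by contradiction, using the observation that a point near the boundary of the minimal ball around the support of $ax$ which lies in two different $x$-translates would force two $\Gamma$-translates of $X$ to be moved into each other by a short isometry, contradicting the infimum displacement hypothesis.

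First I would set up notation: let $S=\bigcup_{a_\gamma\neq0}\gamma X$, let $B_{o'}(R')$ be the smallest ball containing $S$, and let $B_o(R)$ be the smallest ball containing the support of $ax$. Since the support of $ax$ is contained in $S$ (cancellation can only remove points), we have $B_o(R)\subseteq$ (some ball around $S$), so a priori $R\le R'$ and the two balls need not coincide. The key subclaim is that every point of $S$ lying in the $\mu$-neighborhood of $S_{o'}(R')$ actually lies in a \emph{unique} $x$-translate $\gamma X$ with $a_\gamma\neq0$. Granting this subclaim, such a point cannot be cancelled in the sum $ax=\sum_\gamma a_\gamma\,\gamma x$ (only one term contributes there), so it survives into the support of $ax$; in particular the support of $ax$ reaches within $\mu$ of $S_{o'}(R')$, which forces $B_o(R)=B_{o'}(R')$. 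This simultaneously proves that $B_o(R)$ contains every $x$-translate $\{\gamma x\}_{a_\gamma\neq0}$ and that the $\mu$-neighborhood of its boundary meets at most one such translate, which is exactly the statement.

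So the main work is the subclaim. Suppose $p\in S$ lies in $B_{o'}(R')\setminus B_{o'}(R'-\mu)$ and belongs to two distinct translates $\gamma X$ and $\rho X$ with $a_\gamma,a_\rho\neq0$. Set $\sigma=\rho^{-1}\gamma$, so $\sigma\neq1$; then $X$ and $\sigma X$ both lie in the ball $\rho^{-1}B_{o'}(R')=B_{\rho^{-1}o'}(R')$ (since each of $\gamma X,\rho X\subseteq S\subseteq B_{o'}(R')$ and we translate by $\rho^{-1}$), and their intersection contains the point $\rho^{-1}p$, which is in the $\mu$-neighborhood of the boundary sphere $S_{\rho^{-1}o'}(R')$. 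Now Lemma~\ref{delzant} applies with ``$\gamma$'' there equal to $\sigma$, ``$o$'' equal to $\rho^{-1}o'$, and ``$R$'' equal to $R'$: the midpoint $m$ of the segment from $\rho^{-1}p$ to $\sigma\rho^{-1}p$ is moved a distance $\le\mu+9\delta$ by $\sigma^{-1}$. Since $\sigma\neq1$ and hence $\sigma^{-1}\neq1$, this contradicts the hypothesis that $\Gamma$ has infimum displacement $>\mu+9\delta$. This proves the subclaim.

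The step I expect to be the main obstacle is the passage from ``every $\mu$-near-boundary point of $S$ is in a unique translate'' to ``$B_o(R)=B_{o'}(R')$'': one must argue carefully that the minimal ball around the support of $ax$ genuinely agrees with the minimal ball around the \emph{larger} set $S$, i.e.\ that no smaller ball works for the support of $ax$. The point is that if $R<R'$ then the support of $ax$ would avoid the $\mu$-collar $B_{o'}(R')\setminus B_{o'}(R'-\mu)$ entirely (choosing $\mu$ small, or rather arguing that the support of $ax$ is contained in $B_{o'}(R'-\varepsilon)$ for some $\varepsilon$, forces $S\setminus B_{o'}(R'-\varepsilon)$ to consist only of cancelled points, contradicting the subclaim which says those points are \emph{not} cancelled). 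Once this is pinned down the rest is bookkeeping: the minimal balls coincide, so ``$\mu$-neighborhood of the boundary of $ax$'' and ``$\mu$-neighborhood of $S_{o'}(R')$'' are the same collar, and uniqueness of the translate through each such point is precisely what the subclaim delivered.
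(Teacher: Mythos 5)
Your proposal follows the paper's proof essentially line for line: set $S=\bigcup_{a_\gamma\neq0}\gamma X$, take its minimal enclosing ball, apply Lemma~\ref{delzant} (after pre-translating by $\rho^{-1}$) to show no two translates meet in the $\mu$-collar, and conclude that $B_o(R)=B_{o'}(R')$. The reduction of ``two translates $\gamma X,\rho X$ meet near the boundary'' to the $X$-versus-$\sigma X$ form required by Lemma~\ref{delzant} is exactly right, and this is the heart of the matter.

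One caveat about the step you correctly flag as the delicate one, namely the passage from the subclaim to $B_o(R)=B_{o'}(R')$. Your stated justification---that $R<R'$ would force $\mathrm{supp}(ax)$ to avoid the $\mu$-collar of $B_{o'}(R')$, or the variant that $\mathrm{supp}(ax)\subseteq B_{o'}(R'-\varepsilon)$---is not actually correct as written: the minimal ball $B_o(R)$ of $\mathrm{supp}(ax)$ need not be concentric with $B_{o'}(R')$, so $R<R'$ alone does not push $B_o(R)$ off the boundary collar. The clean way to close this is to use the standard fact that the center $o'$ of a minimal enclosing ball lies in the convex hull of the touching set $T=S\cap S_{o'}(R')$, hence $B_{o'}(R')$ is already the minimal enclosing ball of $T$; since your subclaim shows $T\subseteq\mathrm{supp}(ax)$, any ball containing $\mathrm{supp}(ax)$ must have radius $\geq R'$, giving $R=R'$ and (by uniqueness of circumcenters in $\mathbb H^n$) $o=o'$. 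To be fair, the paper's own proof does not spell this out either---it simply asserts ``Therefore $o=o'$, $R=R'$'' in the free-group warm-up and ``The rest is clear'' here---so you should be commended for isolating the step; just replace your heuristic for it with the convex-hull argument.
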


\begin{proof}
The support of the product $ax$ is contained in the set 
$$
S=\bigcup_{a_{\gamma}\not=0}\gamma X.
$$
Let $B_o(R)$ be the smallest ball containing $S$. Delzant's lemma implies that on the $\mu$-neighborhood of the boundary of this ball the $X$ translates $\{\gamma X\}_{a_{\gamma}\not=0}$ do not intersect. Therefore, $B_o(R)$ is the smallest ball containing the support of $ax$. The rest is clear.
\end{proof}

In particular, this says that once the infimum displacement is $>9\delta$ the support of $ax$ is non-empty, so $ax\not=0$. 
\begin{corollary}[Delzant]
If $\Gamma$ has infimum displacement $>9\delta$ then $\mathbb Q\Gamma$ has no zero divisors. 
\end{corollary}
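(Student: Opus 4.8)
The plan is to derive this immediately from Corollary \ref{products} applied with $\mu = 0$, which is permissible precisely because the infimum displacement exceeds $9\delta = 0 + 9\delta$. Since $\mathbb Q\Gamma$ is visibly nonzero, it suffices to show that for any two nonzero elements $a, x \in \mathbb Q\Gamma$ the product $ax$ is nonzero.

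Fix such $a$ and $x$, and let $A,X$ denote their supports. Set $S = \bigcup_{a_{\gamma}\neq 0}\gamma X$; this is a finite nonempty set of points of $\mathbb H^n$, so it has a smallest enclosing ball $B_o(R)$, and there is a point $p \in S$ with $d(o,p) = R$, i.e. $p$ lies on the bounding sphere. By Corollary \ref{products} (with $\mu = 0$), the ball $B_o(R)$ is also the smallest ball containing the support of $ax$, and $p$ lies in at most one of the translates $\gamma X$ with $a_{\gamma}\neq 0$. Since $p \in S$ it lies in \emph{exactly} one such translate, say $p \in \gamma_0 X$ with $a_{\gamma_0}\neq 0$.

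The final step is to read off the coefficient of $p$ in $ax$. Because the $\Gamma$-action on the chosen orbit is free, the equation $p = \gamma_0\xi$ has a unique solution $\xi \in X$, and $p$ is not of the form $\gamma\xi'$ for any other $\gamma$ with $a_{\gamma}\neq 0$ and $\xi' \in X$. Hence the coefficient of $p$ in $ax = \sum_{\gamma,\xi} a_{\gamma}x_{\xi}\cdot(\gamma\xi)$ is the single term $a_{\gamma_0}x_{\xi}$, which is nonzero since $a_{\gamma_0}\neq 0$ and $\xi$ lies in the support of $x$. Therefore $ax \neq 0$, as desired.

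There is essentially no obstacle left once Corollary \ref{products} is in hand; the only point that needs a moment's care is checking that the coefficient of the boundary point $p$ in $ax$ receives exactly one contribution, which is exactly the conjunction of the ``at most one $x$-translate'' clause of Corollary \ref{products} with the injectivity of the group action on the orbit.
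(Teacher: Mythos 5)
Your proof is correct and is essentially the paper's argument: the paper derives this corollary from Corollary \ref{products} in one line, observing that the boundary of the minimal ball containing $S=\bigcup_{a_\gamma\ne0}\gamma X$ has a point lying in a unique translate, so the support of $ax$ is nonempty. You have simply spelled out the coefficient bookkeeping (uniqueness of the factorization $p=\gamma_0\xi$ via freeness of the action) that the paper leaves implicit.
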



\subsection{Approximating barycenters}
The following lemma is useful. 
\begin{lemma}
\label{bary}
Suppose $X$ is a set with diameter $D$ and barycenter $\widehat x$ contained in a ball $B_o(R)$ and $q\in X$ is a point in the $\mu$-neighborhood of the boundary of the ball. Let $qo(D/2)$ be the point obtained by traveling for time $D/2$ along the geodesic from $q$ to $o$. Then
$$
d(qo(D/2),\widehat x)\leq 9\delta+{3\over 2}\mu.
$$
\end{lemma}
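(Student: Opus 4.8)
The plan is to bound $d(qo(D/2),\widehat x)$ by triangulating through the midpoint of a well-chosen diameter-realizing segment of $X$, exactly as in the proof of the fellow traveling lemma and of Lemma \ref{bary}'s cousins above. First I would pick a diameter-realizing segment $pp'$ of $X$ (so $d(p,p')=D$) through which the barycenter is controlled: recall from the discussion preceding the $2\delta$-close remark that $\widehat x$ is $2\delta$-close to the midpoint $m$ of such a segment. So it suffices to show $d(qo(D/2),m)\leq 7\delta+\tfrac32\mu$ for a suitable choice of $pp'$, and then add the $2\delta$.

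Next I would locate $q$ relative to the ball. Since $q$ lies in the $\mu$-neighborhood of $S_o(R)$, we have $d(o,q)\geq R-\mu$; let $R':=d(o,q)$, so $R-\mu\leq R'\leq R$. Now consider $q$ together with the point of $X$ that is furthest from $q$; call it $p$, so $d(q,p)\geq$ (radius of the smallest ball around $q$ containing $X$) and in any case the pair $q,p$ nearly realizes the diameter in the sense that $D/2\leq d(q,p)\leq D$. The key geometric input is fellow traveling applied to the triangle with apex $q$: the geodesic $q\widehat x$ (or $qm$) fellow travels with $qo$ for a time up to roughly $d(q,p)/2$, which is $\geq D/4$ — but that is only half of what I need, so I would instead run the fellow-traveling estimate on the segment $qp$ against $qo$. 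Since $p\in B_o(R)$ and $q\in S_o(R')$ with $R'$ close to $R$, the portion of $qo$ of length $\sim d(q,p)/2$ stays $4\delta$-close (plus an extra $\sim\mu$ from the gap $R-R'$) to the corresponding portion of $qp$; in particular the point $qo(D/2)$ is within $4\delta + O(\mu)$ of $qp(D/2)$, and $qp(D/2)$ is within $\delta$ or so of $m$ by the usual midpoint-in-a-ball estimate. Book-keeping the constants — $2\delta$ (barycenter vs.\ midpoint) $+\ 4\delta$ (fellow traveling) $+\ \delta$ or $2\delta$ (various $\delta$-hyperbolicity slacks) $+\ \tfrac32\mu$ (accumulated from $d(o,q)\geq R-\mu$ and the adjustment of the travel time) — should land at $9\delta+\tfrac32\mu$.

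The main obstacle I expect is the constant-chasing in the fellow-traveling step: fellow traveling as stated (Lemma, Fellow traveling property) requires both endpoints $p,q$ to lie \emph{on} the sphere $S_o(R)$, whereas here only $q$ is near the sphere and $p$ is merely inside the ball. So I would first replace $p$ by its radial projection $\bar p$ to $S_o(R)$ (or, cleanly, note $d(o,p)\geq R - (\text{something})$ only if $p$ is also extremal — which it need not be), and instead argue directly from the defining inequality of $\delta$-hyperbolicity applied to the segment $qp$ and the point $o$: the angle at the midpoint $m''$ of $qp$ is obtuse toward one endpoint, and since $q$ is nearly extremal that endpoint is $q$, giving $d(o,q)\geq d(o,m'')+d(m'',q)-\delta$, hence $d(o,m'')\leq R' - D/2 + \delta$ roughly. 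Combined with $d(o,qo(D/2)) = R' - D/2$, both $m''$ and $qo(D/2)$ sit at nearly the same radius along nearly the same geodesic ray from $q$, and one more application of the segment inequality (or of the "concentric spheres" bullet) pins $d(m'',qo(D/2))\leq 4\delta$-ish. Then $d(m'',m)\leq\delta$-ish and $d(m,\widehat x)\leq 2\delta$ finish it. The only genuinely delicate point is making sure the two uses of "$q$ is near-extremal" (once to say $d(o,q)\geq R-\mu$, once to say the obtuse angle points toward $q$) are compatible and that the $\mu$ errors add up to exactly $\tfrac32\mu$ rather than $2\mu$; I would handle the fractional coefficient by choosing to cut the segment $qo$ at time exactly $D/2$ while the sphere-crossing happens at a slightly different radius, and absorb the discrepancy carefully.
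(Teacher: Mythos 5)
Your overall strategy — triangulate through the midpoint of a segment emanating from $q$ and fellow-travel that segment with $qo$ — is the same as the paper's, but your choice of segment creates a gap you don't close. You take $p\in X$ furthest from $q$ and claim only $d(q,p)\geq D/2$. This is far too weak: with $d(q,p)$ as small as $D/2$, fellow-traveling along $qp$ reaches only up to time $d(q,p)/2\geq D/4$, so you never control the point $qo(D/2)$, and the "midpoint-in-a-ball" estimate puts the midpoint $m''$ of $qp$ only within about $D/4+2\delta$ of $\widehat x$, which is useless. What you actually need is $d(q,p)\geq D-O(\delta)-O(\mu)$. This is true and provable — take a diameter-realizing pair $p_1,p_2$ with midpoint $m$, note $d(q,m)\geq d(q,\widehat x)-2\delta\geq D/2-3\delta-\mu-2\delta$ (using $d(q,o)\geq R-\mu$ and $d(o,\widehat x)\leq R-D/2+3\delta$), and then $\delta$-hyperbolicity gives $\max(d(q,p_1),d(q,p_2))\geq d(q,m)+D/2-\delta\geq D-6\delta-\mu$ — but you never establish it, and without it the rest of the constant-chasing cannot land anywhere near $9\delta+\tfrac{3}{2}\mu$. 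A second, smaller issue: your claim that the obtuse angle at $m''$ "points toward $q$ since $q$ is nearly extremal" is unjustified; $p$ may lie just as close to $S_o(R)$ as $q$ does, so you can't conclude which endpoint sees the obtuse angle, and that information is what places $m''$ on the geodesic ray $qo$ rather than merely at the right radius.

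The paper sidesteps both difficulties by choosing a different segment: it extends $q\widehat x$ \emph{beyond} $\widehat x$ until it exits $S_o(R-\mu)$. Since one may assume $d(q,o)=R-\mu$, both endpoints of this extended segment lie on the same sphere $S_o(R-\mu)$, so the fellow-traveling lemma applies verbatim (no radial projection, no obtuse-angle bookkeeping). The extension has length at least $s=D/2-3\delta-\mu$ because $d(\widehat x,o)\leq R-D/2+3\delta$, and $d(q,\widehat x)$ is pinned between $s$ and $D/2+\delta$; setting $t=\tfrac{1}{2}(d(q,\widehat x)+s)$ then gives $|d(q,\widehat x)-t|\leq\tfrac{1}{2}(4\delta+\mu)$ and $|t-D/2|\leq 3\delta+\mu$, and summing with the $4\delta$ fellow-traveling error yields exactly $9\delta+\tfrac{3}{2}\mu$. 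If you want to rescue your version, first prove $d(q,p)\geq D-6\delta-\mu$ as above, then extend $qp$ past $p$ to $S_o(R-\mu)$ and run the same bookkeeping; but at that point you have essentially rediscovered the paper's proof with a less convenient anchor point.
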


\begin{figure}[h!]
\centering
\includegraphics[scale=0.16]{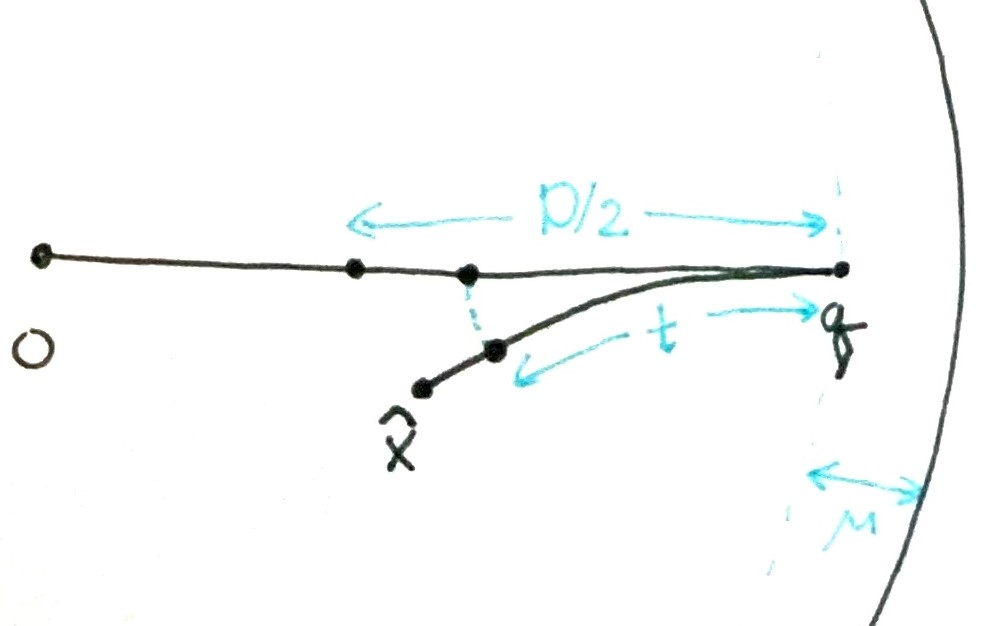}
\end{figure}

\begin{proof}
We can assume that $d(q,o)=R-\mu$. 
First, note that $d(\widehat x,o)\leq R-{D\over 2}+3\delta$ implies that the distance from $\widehat x$ to $S_o(R-\mu)$ is at least $s={D\over 2}-3\delta-\mu$. Therefore, the segment $q\widehat x$ can be extended by $s$ before it reaches $S_o(R-\mu)$, and hence the segment $q\widehat x$ fellow travels with $qo$ for a time $t={d(q,\widehat x)+s\over 2}$. Also note that the distance from $\widehat x$ to $q$ is controlled by 
$$
s\leq d(\widehat x,q)\leq {D\over 2}+\delta.
$$
Therefore 
$$
|d(q,\widehat x)-t|={d(q,\widehat x)-s\over 2}\leq {4\delta+\mu\over 2}
$$
while
$$
s\leq t\leq {D-2\delta-\mu\over 2}
$$ 
implies that 
$$
|t-D/2|\leq 3\delta+\mu.
$$
Thus, the distance from $\widehat x$ to $qo(D/2)$ is bounded by
\begin{eqnarray*}
d(\widehat x,qo(D/2))&\leq&|d(\widehat x,q)-t|+4\delta+|t-D/2|\\
&\leq&9\delta+{3\over 2}\mu. 
\end{eqnarray*} 
\end{proof}
An immediate consequence is the following.

\begin{corollary} 
\label{barydistance}
If $X$ and $Y$ are both sets in $B_o(R)$ containing a point $q$ in the $\mu$-neighborhood of the boundary, then the barycenters of $X$ and $Y$ are ${||X|-|Y||\over 2}$-apart, up to an error $18\delta+3\mu$.
\end{corollary}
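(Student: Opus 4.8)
The plan is to apply Lemma~\ref{bary} twice with the \emph{same} near-boundary point $q$: once to $X$, which gives $d\bigl(qo(|X|/2),\widehat x\bigr)\le 9\delta+\tfrac32\mu$, and once to $Y$, which gives $d\bigl(qo(|Y|/2),\widehat y\bigr)\le 9\delta+\tfrac32\mu$. The single geometric observation needed is that $qo(|X|/2)$ and $qo(|Y|/2)$ lie on one and the same geodesic segment --- the one from $q$ to $o$ --- at arclengths $|X|/2$ and $|Y|/2$ from $q$; hence the distance between these two points is \emph{exactly} $\bigl||X|-|Y|\bigr|/2$, with no error term.

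Granting this, the conclusion is two applications of the triangle inequality to the four points $\widehat x,\ qo(|X|/2),\ qo(|Y|/2),\ \widehat y$ arranged in a chain. The upper estimate is
$$
d(\widehat x,\widehat y)\le d\bigl(\widehat x,qo(|X|/2)\bigr)+d\bigl(qo(|X|/2),qo(|Y|/2)\bigr)+d\bigl(qo(|Y|/2),\widehat y\bigr)\le \frac{\bigl||X|-|Y|\bigr|}{2}+18\delta+3\mu ,
$$
and the lower estimate, obtained by moving the two short legs to the other side, is
$$
d(\widehat x,\widehat y)\ge d\bigl(qo(|X|/2),qo(|Y|/2)\bigr)-d\bigl(\widehat x,qo(|X|/2)\bigr)-d\bigl(qo(|Y|/2),\widehat y\bigr)\ge \frac{\bigl||X|-|Y|\bigr|}{2}-18\delta-3\mu .
$$
Putting the two together gives $\bigl|\,d(\widehat x,\widehat y)-\tfrac12\bigl||X|-|Y|\bigr|\,\bigr|\le 18\delta+3\mu$, which is precisely the assertion.

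I do not expect a genuine obstacle here, since the corollary is essentially a repackaging of Lemma~\ref{bary}. The only point that merits a word of care is the well-definedness of the points $qo(|X|/2)$ and $qo(|Y|/2)$, i.e.\ that the segment from $q$ to $o$ is long enough to reach them; but this is already implicit in Lemma~\ref{bary}, applied separately to $X$ and to $Y$, and both applications take place in the \emph{same} ball $B_o(R)$ with the \emph{same} boundary point $q$, so no new input is needed. If one wishes to be fully explicit, one can first reduce to the normalized situation $d(q,o)=R-\mu$, exactly as at the start of the proof of Lemma~\ref{bary}.
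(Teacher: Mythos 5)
Your argument is correct and is exactly what the paper has in mind: the paper gives no proof at all, calling the corollary ``an immediate consequence'' of Lemma~\ref{bary}, and your two applications of that lemma at the common near-boundary point $q$, combined with the observation that $qo(|X|/2)$ and $qo(|Y|/2)$ lie on the same geodesic at distance exactly $\bigl||X|-|Y|\bigr|/2$ apart, plus the triangle inequality, is the intended deduction.
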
 

A more significant consequence for us is the following. 
\begin{corollary}
\label{jump}
Suppose $X,\gamma X,$ and $Y$ are contained in a ball $B_o(R)$, the intersection of $Y$ and $X$ contains $q$, and the intersection of $Y$ and $\gamma X$ contains $q'$, where $q$ and $q'$ are in the $\mu$-neighborhood of the boundary of the ball. If $|X|\geq|Y|$ then 
$$
d(\widehat x,\gamma\widehat x)\leq 36\delta+6\mu.
$$
\end{corollary}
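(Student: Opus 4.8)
The plan is to reduce Corollary~\ref{jump} to Corollary~\ref{barydistance} (or directly to Lemma~\ref{bary}) by triangulating through a common point. The hypotheses give us two sets, $Y$ and $X$, that share the near-boundary point $q$, and two sets, $Y$ and $\gamma X$, that share the near-boundary point $q'$, all inside $B_o(R)$. First I would apply Corollary~\ref{barydistance} to the pair $X,Y$ at the point $q$: since both lie in $B_o(R)$ and contain $q$ in the $\mu$-neighborhood of the boundary, the barycenters satisfy
\[
d(\widehat x,\widehat y)\leq \frac{\bigl||X|-|Y|\bigr|}{2}+18\delta+3\mu.
\]
Likewise, applying Corollary~\ref{barydistance} to the pair $\gamma X,Y$ at the point $q'$ gives
\[
d(\gamma\widehat x,\widehat y)\leq \frac{\bigl||\gamma X|-|Y|\bigr|}{2}+18\delta+3\mu.
\]

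Next I would use that $\gamma$ is an isometry, so $|\gamma X|=|X|$ and the barycenter of $\gamma X$ is $\gamma\widehat x$; this is what makes the two right-hand sides equal. The triangle inequality then yields
\[
d(\widehat x,\gamma\widehat x)\leq d(\widehat x,\widehat y)+d(\widehat y,\gamma\widehat x)\leq \bigl||X|-|Y|\bigr|+36\delta+6\mu.
\]
Now the hypothesis $|X|\geq|Y|$ is meant to be combined with the fact that $Y\cap X\neq\emptyset$ and both sit in $B_o(R)$: I would argue that $\bigl||X|-|Y|\bigr|=|X|-|Y|$ is forced to be small. Here the key point is that $q$ lies within $\mu$ of $S_o(R)$ and belongs to both $X$ and $Y$, so by Lemma~\ref{bary} both $\widehat x$ and $\widehat y$ are within $9\delta+\tfrac32\mu$ of the \emph{same} point $qo(|X|/2)$ respectively $qo(|Y|/2)$ on the geodesic $qo$; comparing the two radial coordinates along this geodesic shows $|X|/2$ and $|Y|/2$ differ by at most $O(\delta+\mu)$, so $|X|-|Y|$ is itself bounded by a universal multiple of $\delta$ and $\mu$ — in fact the constants are arranged so that the total comes in under $36\delta+6\mu$.

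The step I expect to be the main obstacle is getting the constants to actually close up at $36\delta+6\mu$ rather than something larger: the naive triangulation above already spends $36\delta+6\mu$ just on the two applications of Corollary~\ref{barydistance}, and then one still has to absorb the $\bigl||X|-|Y|\bigr|$ term. So in practice I would not go through Corollary~\ref{barydistance} as a black box but instead re-run the proof of Lemma~\ref{bary} directly for this configuration: place $q$ (or $q'$) as the anchor near the boundary, note $d(\widehat x, qo(|X|/2))\le 9\delta+\tfrac32\mu$ and $d(\widehat y, qo(|Y|/2))\le 9\delta+\tfrac32\mu$, and similarly with $q'$ for $\gamma\widehat x$. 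Since $|X|=|\gamma X|$, the two points $qo(|X|/2)$ and $q'o(|\gamma X|/2)$ are at the \emph{same} radius $R-\mu-|X|/2+O(\delta)$ from $o$; a short computation using $\delta$-hyperbolicity (the fellow-traveling lemma) bounds the distance between them, and then the triangle inequality
\[
d(\widehat x,\gamma\widehat x)\le d(\widehat x,qo(\tfrac{|X|}2))+d(qo(\tfrac{|X|}2),q'o(\tfrac{|X|}2))+d(q'o(\tfrac{|X|}2),\gamma\widehat x)
\]
gives the claim. Keeping careful track of which estimates are additive and which cancel because of the isometry $\gamma$ — and using $|X|\ge|Y|$ at exactly the right moment to kill the diameter-difference term — is the delicate bookkeeping that the proof has to get right.
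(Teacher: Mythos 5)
Your instinct to anchor at $q$ and $q'$ and compare the points $qo(|X|/2)$ and $q'o(|X|/2)$ is the right one, and you correctly diagnose that the naive triangulation through $\widehat y$ via Corollary~\ref{barydistance} gives $d(\widehat x,\gamma\widehat x)\leq\bigl||X|-|Y|\bigr|+36\delta+6\mu$, which is not good enough. But both of your attempted fixes contain a genuine gap.

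In the first attempt you claim that $\bigl||X|-|Y|\bigr|$ is forced to be $O(\delta+\mu)$ because $\widehat x$ and $\widehat y$ lie near $qo(|X|/2)$ and $qo(|Y|/2)$ on the common geodesic $qo$. That argument would only work if $\widehat x$ and $\widehat y$ were already known to be close, which is not a hypothesis and is simply false in general: $Y$ can be a tiny set sitting near the boundary of the ball while $X$ is huge, so $|X|-|Y|$ is unbounded. (In the application in Section~\ref{surfacealgproof} the corollary is invoked with $X$ playing the role of the support of $x$ and $Y$ the support of a $y$-translate, precisely in the regime where $|x|-|y|$ may be large, so this term has to vanish from the estimate rather than be absorbed.) In the second attempt you assert that $qo(|X|/2)$ and $q'o(|X|/2)$ are close because they sit at the same radius from $o$; this is not true either — two points on a common sphere about $o$ can be antipodal — and the fellow-traveling lemma compares a geodesic $qo$ to a chord $qq'$, not directly to $q'o$, so it does not apply as stated. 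What is missing is the role of $Y$: since \emph{both} $q$ and $q'$ lie in $Y$, Lemma~\ref{bary} gives $d(\widehat y,qo(|Y|/2))\leq 9\delta+\tfrac32\mu$ \emph{and} $d(\widehat y,q'o(|Y|/2))\leq 9\delta+\tfrac32\mu$, hence $d(qo(|Y|/2),q'o(|Y|/2))\leq 18\delta+3\mu$. The paper then defines $f(t)=d(qo(t),q'o(t))$, observes that $f$ is convex and tends to $0$ as $t$ approaches $o$, so $f$ does not increase as $t$ grows from $|Y|/2$ to $|X|/2$ — this is exactly where $|X|\geq|Y|$ is used, not to bound $|X|-|Y|$ but to ensure $|X|/2\geq|Y|/2$. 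This gives $f(|X|/2)\leq 18\delta+3\mu$, and applying Lemma~\ref{bary} once more to $X$ at $q$ and to $\gamma X$ at $q'$ (using $|\gamma X|=|X|$) and the triangle inequality yields $d(\widehat x,\gamma\widehat x)\leq 36\delta+6\mu$. So the two ideas your proposal never quite pins down are applying Lemma~\ref{bary} to $Y$ at the second point $q'$, and the convexity step that transports the bound from radius $|Y|/2$ to radius $|X|/2$.
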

\begin{figure}[h!]
\centering
\includegraphics[scale=0.15]{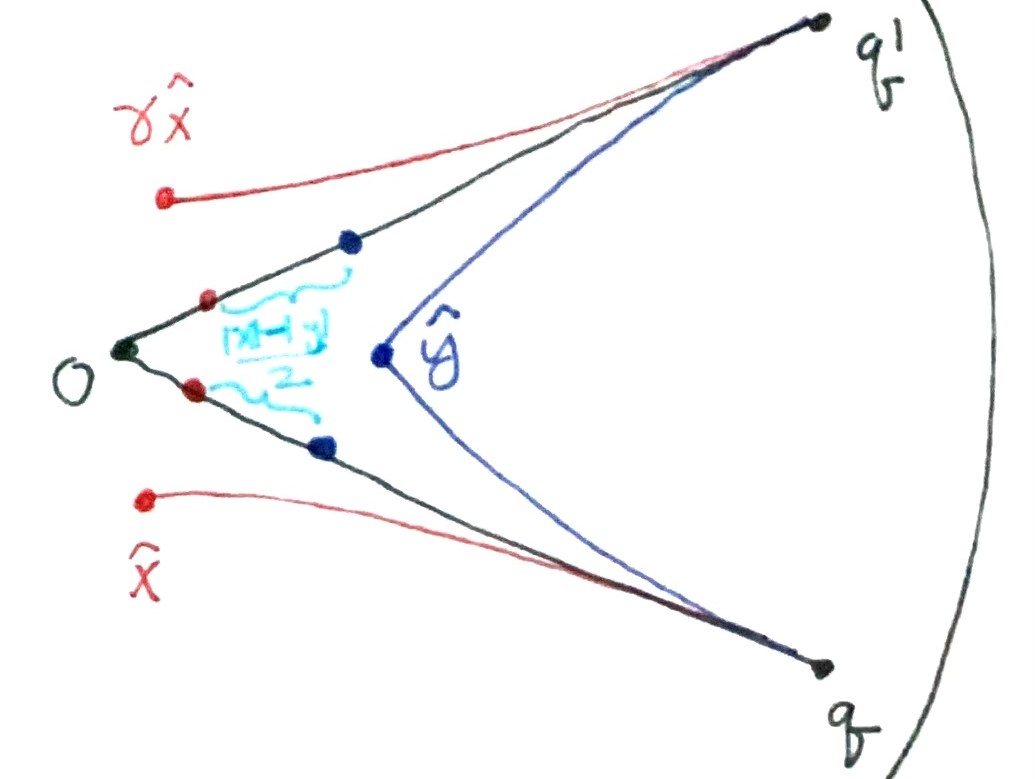}
\end{figure}

\begin{proof}
Look at the function $f(t)=d(qo(t),q'o(t))$. It is $\leq 18\delta+3\mu$ at $t=|Y|/2$ by Lemma \ref{bary}. It decreases as $t$ goes from $|Y|/2$ to $|X|/2$ by convexity (This is where we use $|X|\geq|Y|$). Finally, at $t=|X|/2$ it differs from $d(\widehat x,\gamma\widehat x)$ by an error of at most $18\delta+3\mu$, again by Lemma \ref{bary}. This establishes the corollary. 
\end{proof}
We will use these two corollaries in the proof of the division algorithm for surface groups in the next section. 

\section{\label{surfacealgproof}Proof of the division algorithm for surface groups}
\subsection{Setup}
Throughout the proof, we will keep track of how large the infimum displacement has to be for the argument to work at that stage. The infimum displacement needed to make everything work is stated at the end, where we also translate it into a condition about the genus of the surface. To start, we assume the infimum displacement is $>9\delta$ so that there are no zero divisors.

We are given a non-trivial relation $ax+by=0$ where $a, b, x$ and $y\not=0$ are elements of the group ring $\mathbb Q\Gamma$, and we want to show that there are $q,r\in\mathbb Q\Gamma$ such that $x=qy+r$ and $|r|<|y|$ or $r=0$. 
If $|x|<|y|$ then there is nothing to do, since we can take $q=0,r=x$. If $|x|\geq |y|$, then it is enough to subtract a multiple $b'y$ of $y$ from $x$ for which the resulting element $x'=x-b'y$ has smaller diameter than $x$. Since the set of possible diameters is discrete and the elements $x'$ and $y$ satisfy the non-trivial\footnote{If this relation were trivial then $a=0$ and the original relation would imply that $y$ is a zero-divisor.} relation $a(x-b'y)+(b+ab')y=0$, iterating the process finitely many times will prove the division algorithm.

Next, we will describe which points of $x$ we will try to cancel out with translates of $y$ in order to reduce the diameter of the resulting group ring element $x'$. This will be dictated by the relation $ax+by=0$. Let $o$ be the barycenter of the support of $ax$ and $R$ the radius of $ax$. As long as the infimum displacement is $>9\delta$, by Corollary \ref{products}, all the $x$-translates $\{\gamma x\}_{ a_{\gamma}\not=0}$ are contained in the ball $B_o(R)$. Pick such an $x$-translate $\gamma x$ containing a boundary point of $ax$. After multiplying our relation on the left by $(a_{\gamma}\gamma)^{-1}$, we can assume that this translate is $x$, i.e. that $x$ contains a boundary point of $ax$ and that $a_1=1$. 

Let us call the points of $x$ that are in the $\alpha$-neighborhood of the boundary of $ax$ the {\it $\alpha$-extremal} points of $x$. We have shown in Lemma \ref{5delta} that if we throw out the $5\delta$-extremal points from the support of $x$, then the resulting set has strictly smaller diameter. So these are the points we will try cancel out. To that end, note that if the infimum displacement is $>5\delta+9\delta$ then, by Corollary \ref{products}, all these $5\delta$-extremal points are not contained in any other $x$-translate $\gamma x$ with $a_{\gamma}\not=0$.  The relation $ax=-by$ implies that each one of them must be contained in a $y$-translate $\gamma y$ with $b_{\gamma}\not=0$, and Corollary \ref{products} applied to $by$ implies that there is a {\it unique} such $y$ translate. 

Therefore, as long as the infimum displacement of $\Gamma$ is sufficiently large ($>14\delta$), the $5\delta$-extremal points of $x$ can all be cancelled by $y$-translates (weighted with appropriate coefficients)\footnote{To be more precise, $c_{\gamma}=-b_{\gamma}$ if $\gamma y$ contains a $5\delta$-extremal point of $x$ and $c_{\gamma}=0$ otherwise.}. Call the resulting element 
$$
x'=x-\sum c_{\gamma}\gamma y.
$$ 
Our goal in the rest of the proof is to show the diameter of $x'$ is less than the diameter of $x$. 
\subsection{Showing $|x'|<|x|$}
We will warm up by showing that if the diameter of $x'$ is greater than that of $x$, it cannot be much greater. We can estimate the distance from a $y$-point $p$ of $x'$ to $\widehat x$ using the estimate on the distance between barycenters given in Corollary \ref{barydistance}:
\begin{eqnarray*}
d(\widehat x,p)&\leq& d(\widehat x,\gamma\widehat y)+d(\gamma\widehat y,p)\\
&\leq&\left({|x|-|y|\over 2}+18\delta+3\cdot 5\delta\right)+\left({|y|\over 2}+\delta\right)\\
&=&{|x|\over 2}+34\delta.
\end{eqnarray*}
So, if $|x'|\geq|x|$ it follows that a diameter realizing segment of $x'$ has midpoint $35\delta$-close to $\widehat x$. Note for future use that this implies one of the endpoints of this segment is $37\delta$-extremal.

Let us look more at the extra $y$-points $p$ of $x'$ that have been introduced by subtracting the $y$-translates $\{\gamma y\}_{c_{\gamma}\not=0}$ from $x$. Fix a constant $\mu\geq 37\delta.$ First, we will show that if the infimum displacement is sufficiently large ($>36\delta+6\mu$), then such a $y$-point $p$ cannot be $\mu$-extremal. If it was, then it would have to cancel with a unique $x$-translate $\rho x$ that is different from $x$.\footnote{If $p$ canceled with $x$ then it would not have appeared in $x'$.} But then the barycenters $\widehat x$ and $\rho\widehat x$ would be too close! More precisely, we would have $d(\widehat x,\rho\widehat x)\leq 36\delta+6\mu$ by Corollary \ref{jump}, which contradicts the infimum displacement assumption. Therefore, if $|x'|\geq|x|$ then $x'$ has a $37\delta$-extremal $x$-point. Call this point $q'$.  

\begin{remark}
In the case of free groups acting on trees, $\delta=0$ and above we can take $\mu=0$ so that at this stage in the argument we have an element $x'$ with $|x'|\leq|x|$ and if $|x'|=|x|$ then $x'$ has an extremal point, which is not a $y$-point, hence must be an $x$ point. But we assumed that all the extremal $x$-points have been canceled out, so we arrive at a contradiction. In the surface group case we have to work harder. The reason is because we have found a $37\delta$-extremal $x$-point $q'$ in $x'$, while only the $5\delta$-extremal $x$-points have been canceled out.
\end{remark}

Now, for large enough infimum displacement $(>37\delta+9\delta)$ the point $q'$ appears in a unique $y$ translate $\rho y$ that is different from all the $y$-translates $\{\gamma y\}_{c_{\gamma}\not=0}$ that we subtracted from $x$ to get $x'$. By Corollary \ref{barydistance} we have 
$$
d(\widehat x,\rho\widehat y)\leq{|x|-|y|\over 2}+18\delta+3\cdot 37\delta.
$$ 

The rest of the argument breaks up into two cases, depending on the size of $|x|-|y|$. 

\subsection*{First, we deal with the case is $|x|-|y|\leq\mu$} 
In this case, the barycenter of $\rho y$ and of all the $y$-translates $\{\gamma y\}_{c_{y}\not=0}$ are $\left({\mu\over 2}+129\delta\right)$-close to $\widehat x$. Therefore, if the infimum displacement is large enough $(>\mu+258\delta)$ we must have $\rho\widehat y=\gamma\widehat y$ and hence $\rho=\gamma$, which is a contradiction. 


\subsection*{Finally we deal with the case $|x|-|y|\geq\mu$} 
We will show that in this case the $y$-points of $x'$ are $<|x|/2-\delta$ away from $\widehat x$. This will imply that the diameter of $x'$ is less than $|x|$ and we will be done.

Let $p$ be a $y$-point of $x'$. Thus, there is a $y$-translate $\gamma y$ and a $5\delta$-extremal point $q$ of $x$ so that both $p$ and $q$ are in $\gamma y$. Denote by $L$ the length of the segment $pq$ and $m$ its midpoint. Since $p$ is not $\mu$-extremal, the segment $qp$ can be extended by $\mu-5\delta$ before it reaches the $5\delta$-neighborhood of the boundary. Let $m'$ be the midpoint of this extended segment. It fellow travels with the segment $qo$ for a distance $t={L+\mu-5\delta \over 2}$. Let $y_0=qo(t)$ be the point obtained by traveling from $q$ to $o$ for a time $t$. Also, let $x_0=qo(|x|/2)$ be the point obtained by traveling from $q$ to $o$ for a time $|x|/2$. This is illustrated in the figure below.

\begin{figure}[h!]
\centering
\includegraphics[scale=0.16]{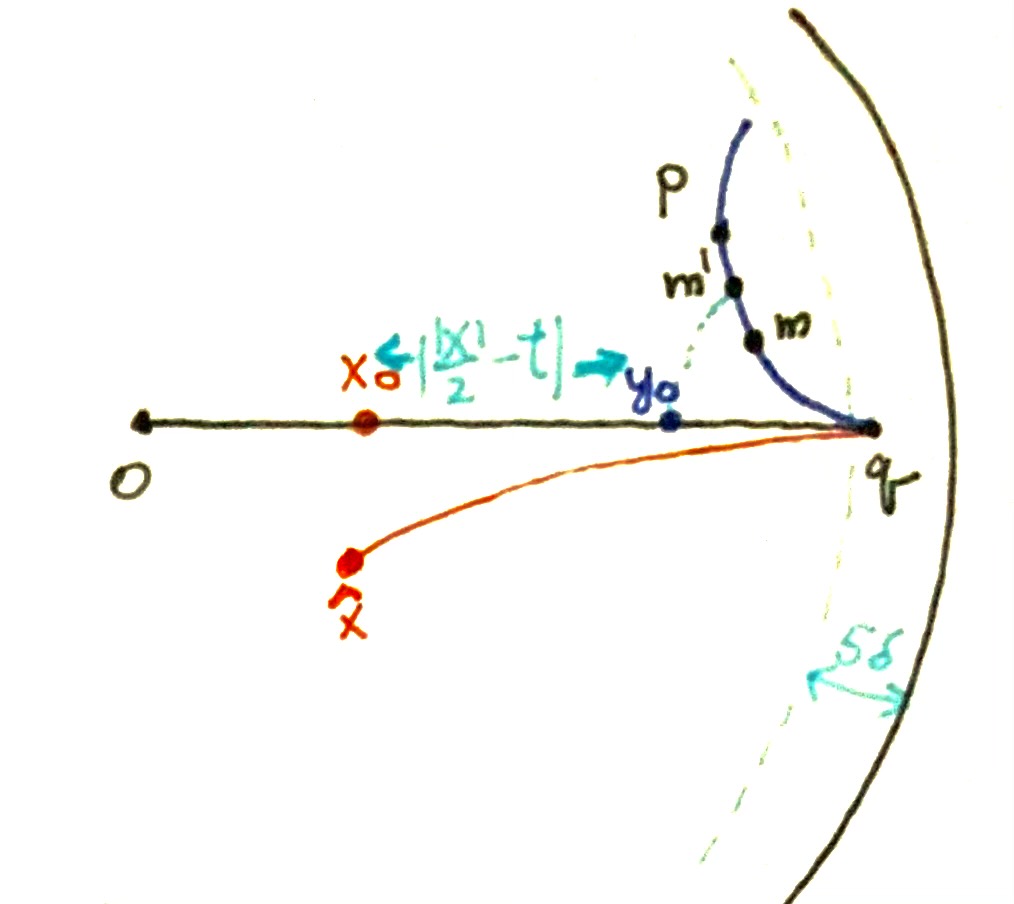}
\end{figure}

Note that ${|x|\over 2}-t={|x|-L-(\mu-5\delta)\over 2}$ is non-negative because of the case we are in, so we can compute
\begin{eqnarray*}
d(\widehat x,p)&\leq& d(\widehat x, x_0)+d(x_0,y_0)+d(y_0,m')+d(m',p)\\
&\leq& \left(9\delta+{3\over 2}\cdot 5\delta\right)+\left|{|x|\over 2}-t\right|+4\delta+{L-(\mu-5\delta)\over 2}\\
&=&{|x|\over 2}+25.5\delta-\mu.
\end{eqnarray*}
So, for this part of the argument to work, any $\mu>26.5\delta$ will do.

In summary, everything works for $\mu=37\delta$, and in that case the biggest displacement condition we need is that the infimum displacement is $\mu+258\delta=295\delta$. Since we are in hyperbolic space, we can take $\delta=5$. 

\subsection*{From infimum displacement to genus}  Buser showed in \cite{buser} that every surface of genus $\geq 2$ has a hyperbolic metric with infimum displacement (=length of shortest geodesic) $\geq 2\sqrt{\log(g)}$. For this metric and $g\geq e^{1000,000}$ we get infimum displacement $\geq 2000>295\cdot 5$, which is good enough. This finishes the proof. 
\begin{remark}
Buser and Sarnak show in \cite{busersarnak} that there is a sequence of hyperbolic surfaces $\Sigma_{g_i}$ with $g_i\ra\infty$ for which one has a much better bound, namely infimum displacement $\geq{4\over 3}\log g_i$ and that every genus $g$ surface has a hyperbolic metric with infimum displacement $c\log g$ where $c$ is some small (unspecified) positive constant that doesn't depend on the genus, but neither of these can be directly applied to get an explicit bound on how high the genus $g$ has to be.  
\end{remark} 
\begin{remark}
This last step is the only place in the proof where we use the fact that $\Gamma$ is a surface group. Everything else works word-for-word (with the same constants) for groups $\Gamma$ acting isometrically on hyperbolic $n$-space $\mathbb H^n$ with infimum displacement $\geq 295\cdot 5$. 
\end{remark}
\subsection*{A remark about fields}
Everything in the paper so far works with coefficients $\mathbb Q$ replaced by any field $k$, in particular by the finite fields $\mathbb F_p$. This will be used in the next section.

\section{\label{algebra}Euclid's algorithm and algebraic applications}

\subsection{Proof of Euclid's algorithm}
We are given a pair of elements $x,y\in\mathbb Q\Gamma$ satisfying a non-trivial relation $ax+by=0$. Dividing $x$ by $y$ we get $q_0$ and $r_0$ such that $x=q_0y+r_0$ and $|r_0|<|y|$ or $r_0=0$. If $r_0\not=0$ then the elements $y$ and $r_0$ satisfy the non-trivial relation $ar_0+(b+aq_0)y=0$. So we can divide $y$ by $r_0$ to get $q_1$ and $r_1$ such that $y=q_1r_0+r_1$ and $|r_1|<|r_0|$ or $r_1=0$, and so on. We iterate this process. Since at each step the diameter of the remainder decreases, the process stops after finitely many steps with an $r_k$ that divides $r_{k-1}$ without remainder. All the pairs produced in this way generate the same ideal $(x,y)=(y,r_0)=(r_0,r_1)=\dots=(r_{k-1},r_k)=(r_k,0)$. 

We now show that the last remainder $z=r_k$ is a greatest common divisor of $x$ and $y$. The element $z$ is a divisor of $x$ and $y$ since $x,y\in(z)$. Suppose $z'$ is another divisor such that $x=cz',y=c'z'$. Since $z\in(x,y)$ we can express it as $\mathbb Q\Gamma$-linear combination $z=a'x+b'y=(a'c+b'c')z'$, so $z'$ is a divisor of $z$. Therefore, $z$ is a greatest common divisor of $x$ and $y$. 

\subsection{Modules generated by pairs of vectors $v,w$ in $\mathbb Q\Gamma^n$ and $\mathbb F_p\Gamma^n$}

Delzant's result that $\mathbb Q\Gamma$ has no zero-divisors implies that the submodule of $\mathbb Q\Gamma^n$ generated by a single non-zero vector $v=(v_1,\dots,v_n)$ is free.\footnote{If there are no zero divisors, then the map $\mathbb Q\Gamma\ra \mathbb Q\Gamma^n,a\mapsto av$ is an isomorphism onto its image, which is the module generated by $v$.} 
Our division algorithm implies the analogous result for a pair of vectors. The proof is very similar to that of Euclid's algorithm. 
Since we will need both the $\mathbb Q$ and $\mathbb F_p$ versions in the next subsection, we state it for a general field $k$. 
\newline

\noindent
{\bf Corollary \ref{freefield}.}
{\it Let $k$ be a field. Any submodule $M$ of $k\Gamma^n$ generated by a pair of non-zero vectors $v, w$ is free.} 

\begin{proof}
We may assume that $v_1\not=0$ and that $|v_1|\geq|w_1|$. If for any relation $av+bw=0$ both $a$ and $b$ are zero, then $M$ is free of rank two. So, suppose there is such a relation with either $a$ or $b$ non-zero. We will show that this implies $M$ is free of rank one.

There are two cases to consider, depending on whether or not $w_1$ is zero. 

\subsection*{Case 1: $w_1=0$} Looking at the first coordinate of the relation, we get $av_1=0$ and since $v_1\not=0$ we must have $a=0$. Thus $bw=0$. Since the relation was non-trivial, $b\not=0$ so we must have $w=0$. But then $M$ is generated by a single vector $v$, and hence it is free of rank one. 

\subsection*{Case 2: $w_1\not=0$} Then the relation $av_1+bw_1=0$ implies both $a$ and $b$ are non-zero. We use this relation to divide $v_1$ by $w_1$ and get $v'=v-qw$ satisfying $|v_1'|<|w_1|$ or $v_1'=0$. Then the vectors $v',w$ still generate $M$ and either $v'_1=0$ or the sum of diameters of their first entries $|v_1'|+|w_1|$ is strictly smaller than $|v_1|+|w_1|$. Moreover, $av'+(b-aq)w=0$ is again a non-trivial relation (with $a\not=0)$. 

At this point, we have arrived back at the situation of the two cases, with $v_1'$ in place of $w_1$. Moreover, if $v_1'\not=0$ then the sum of diameters of the first entries of generators $|v_1'|+|w_1|$ is strictly smaller than $|v_1|+|w_1|$. Therefore, after iterating this process finitely many times it will stop and we will arrive in the case 1 situation with $M$ a free module generated by a single vector. 
\end{proof}

\subsection{Bass's `local-to-global' method for $\mathbb Z\Gamma$-modules}

Obviously any zero divisor in  $\mathbb Z\Gamma$ is a zero-divisor in $\mathbb Q\Gamma$, so submodules of $\mathbb Z\Gamma^n$ generated by a single vector are free. The analogous statement for modules generated by a pair of vectors is not true. For example, the ideal $(2,t-1)$ in the group ring $\mathbb Z[\mathbb Z]=\mathbb Z[t,t^{-1}]$ is not free even though all ideals in $k[\mathbb Z]$ are. A general `local-to-global'\footnote{It is a `local-to-global' theorem because it assembles $\mathbb Q$ and $\mathbb F_p$ results to get a $\mathbb Z$ result.} theorem of Bass (\cite{bass}) shows that this sort of thing doesn't happen when the module splits off as a direct summand of $\mathbb Z\Gamma^n$ (in other words, if the module is {\it projective}). This is good enough for the proof of Theorem \ref{standard} given in section \ref{topology}. The proof given below is Bass's argument, specialized to our situation.    

\begin{corollary}
\label{bass}
If a submodule of $\mathbb Z\Gamma^n$ generated by a pair of non-zero vectors $v,w$ splits off as a direct summand, then it is free.
\end{corollary}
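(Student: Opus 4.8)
The plan is to follow Bass's local-to-global principle. Write $M \subseteq \mathbb{Z}\Gamma^n$ for the submodule generated by $v, w$, and assume it is a direct summand of $\mathbb{Z}\Gamma^n$, hence projective. The first step is to use the field results already proven: Corollary \ref{freefield} says that for every field $k$, the submodule of $k\Gamma^n$ generated by the images $\bar v, \bar w$ of $v, w$ is free, of rank one or two. Applying this to $k = \mathbb{Q}$ gives that $M \otimes_{\mathbb{Z}} \mathbb{Q}$ is free over $\mathbb{Q}\Gamma$ of some rank $\rho \in \{1,2\}$; applying it to $k = \mathbb{F}_p$ gives that $M \otimes_{\mathbb{Z}} \mathbb{F}_p \cong (M/pM)$, which (since $M$ is a summand of a free module, hence flat, so $M/pM$ is the submodule of $\mathbb{F}_p\Gamma^n$ generated by the reductions of $v,w$) is free over $\mathbb{F}_p\Gamma$ of some rank $\rho_p$. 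Since $\rho$ and $\rho_p$ are both detected by the rank of the $2 \times n$ presentation-relation module — or more simply since reduction mod $p$ and tensoring with $\mathbb{Q}$ both preserve the rank of a projective module over $\mathbb{Z}\Gamma$ (via $K_0$ of the coefficient ring, or by counting ranks after further abelianizing $\Gamma$) — all these ranks agree: $\rho = \rho_p =: \rho$ for all $p$.

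The second step is to invoke Bass's theorem from \cite{bass}: a finitely generated projective module $P$ over a ring $\mathbb{Z}\Gamma$ (with $\Gamma$ finitely generated, so $\mathbb{Z}\Gamma$ Noetherian in the relevant sense, or at least with $P$ finitely presented) which has constant rank $\rho$ after base change to $\mathbb{Q}$ and to every $\mathbb{F}_p$, and which is generated by $\rho + 1$ elements, is in fact free. Here $M$ is generated by the two elements $v, w$, so $M$ is generated by $\rho + 1$ elements when $\rho = 1$; when $\rho = 2$ the module $M$ is already free (it is a rank-two summand generated by two elements, hence those two elements are a basis). So the case that requires Bass's theorem is precisely $\rho = 1$: a rank-one projective generated by two elements. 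This is exactly the situation Bass's stable-range / cancellation argument handles, using that $\mathbb{Z}\Gamma^n / M$ is projective (being the complementary summand) so one can cancel.

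The key steps in order: (i) reduce mod every prime and over $\mathbb{Q}$, and apply Corollary \ref{freefield} to conclude all localizations of $M$ are free; (ii) check the ranks coincide, giving a well-defined rank $\rho \in \{1,2\}$; (iii) dispose of the trivial case $\rho = 2$ directly; (iv) in the case $\rho = 1$, apply Bass's theorem to the rank-one projective $M$ generated by $\rho+1 = 2$ elements to conclude $M$ is free. The main obstacle I expect is step (ii)–(iv): one must make sure the hypotheses of Bass's theorem are met, in particular that "projective of constant rank, generated by rank-plus-one elements" really is the form of Bass's result being used, and that $\Gamma$ being a (torsion-free, finitely generated) surface group places $\mathbb{Z}\Gamma$ in the class of rings to which that theorem applies; the field-coefficient inputs themselves are already in hand from the earlier sections, so the real work is bookkeeping the hypotheses of the cited theorem and handling the rank-one-generated-by-two case cleanly.
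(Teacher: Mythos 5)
Your high-level plan (reduce mod $p$ and to $\mathbb{Q}$, apply Corollary \ref{freefield}, invoke a local-to-global principle attributed to Bass) matches the paper's, but the proposal leaves the core of the argument unproved. The paper explicitly warns that it is reproducing Bass's \emph{argument}, not citing a black-box theorem, and the theorem you state --- ``a finitely generated projective $\mathbb{Z}\Gamma$-module of constant rank $\rho$ after base change to $\mathbb{Q}$ and all $\mathbb{F}_p$, generated by $\rho+1$ elements, is free'' --- is not a standard citeable result in this generality, and you acknowledge you haven't verified it applies. That invocation is exactly where the difficulty is hiding.

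What the paper does, and what your proposal omits, is the concrete descent that makes the local-to-global idea actually work in the $\rho = 1$ case. One takes the $\mathbb{Q}\Gamma$-gcd $z$ of $v$ and $w$ produced by Euclid's algorithm and rescales it so that $z\in(v,w)$ but $z\notin(kv,kw)$ for any integer $k>1$; then one picks the minimal positive integer $m$ with $(mv,mw)\subset(z)$, giving the sandwich $(mv,mw)\subset(z)\subset(v,w)$, and shows $m=1$. The crux is that for any prime $p\mid m$, the map $(z)_p\to(v,w)_p$ of mod-$p$ reductions is injective: the direct summand hypothesis gives $(v,w)_p\hookrightarrow\mathbb{F}_p\Gamma^n$, Corollary \ref{freefield} over $\mathbb{F}_p$ makes $(v,w)_p$ free, and Corollary \ref{freefield} again makes the (cyclic) image of $(z)_p$ free, so the map is injective or zero, and zero is ruled out by the rescaling of $z$. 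Since $(mv,mw)_p\to(z)_p\to(v,w)_p$ vanishes (as $p\mid m$), injectivity forces $(mv,mw)\subset(pz)$, contradicting minimality of $m$. None of this appears in your proposal, and it cannot be replaced by the $K_0$-rank bookkeeping you describe. Your treatment of the easy case --- that if $v,w$ satisfy no relation then $M$ is free of rank two --- is fine and agrees with the paper, and the rank observations via augmentation to $K_0(\mathbb{Z})$ are correct but not the point; the real work is the descent argument that your ``Bass's theorem'' is standing in for.
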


\begin{proof}
If $v$ and $w$ do not satisfy any non-trivial relation, then $(v,w)$ is a free $\mathbb Z\Gamma$-module. If they satisfy a non-trivial relation, then they generate the same $\mathbb Q\Gamma$ module as their greatest common divisor $z\in\mathbb Q\Gamma^n$. We rescale $z$ (multiplying by a rational number if necessary) so that $z\in(v,w)$ and $z\notin(kv,kw)$ for any integer $k>1$. Since $z$ is a $\mathbb Q\Gamma$-divisor of $v$ and $w$, there is a positive integer $m$ such that $mv=az,mw=bz$ for some $a,b\in\mathbb Z\Gamma$. Pick the smallest such $m$. In summary we have sandwiched the module generated by $z$ in the following way:
$$
(mv,mw)\subset (z)\subset(v,w).
$$ 
Our goal is to show that $m=1$. Suppose it is not, and let $p$ be a prime dividing $m$. 
For any $\mathbb Z\Gamma$-module $M$, denote the mod $p$ reduction by $M_p:=M/pM$. Note that the composition of induced maps 
\begin{equation}
\label{composition}
(mv,mw)_p\ra(z)_p\ra (v,w)_p
\end{equation} 
is zero because $p$ divides $m$. The key is to show the second map is injective. 
\subsection*{Claim: The map $i:(z)_p\ra(v,w)_p$ is injective}
This is where we use the assumption that $(v,w)$ is a direct summand of $\mathbb Z\Gamma^n$. It implies that the inclusion $(v,w)\hookrightarrow \mathbb Z\Gamma^n$ induces an inclusion of mod $p$ reductions $(v,w)_p\hookrightarrow\mathbb F_p\Gamma^n$. By Corollary \ref{freefield}, $(v,w)_p$ is a free $\mathbb F_p\Gamma$-module. So, the image of $i$ is a submodule of a free module and generated by one element, so it is free by Corollary \ref{freefield}. Therefore, $i$ is either injective or the zero map. The later happens precisely if $z\in (pv,pw)$, but this is ruled out by our choice of $z$. So, the map $i$ is injective. 

Since the composition (\ref{composition}) is zero, this implies the first map $(mv,mw)_p\ra(z)_p$ is the zero map, which is the same as saying $(mv,mw)\subset(pz)$. But then $z$ is a $\mathbb Z\Gamma$-divisor of both ${m\over p}v$ and ${m\over p}w$, which contradicts the minimality of $m$. So we are done.
\end{proof}

\subsection{Proof of Corollary \ref{freez}}
Note that all we used in the proof above is that the inclusion $M\hookrightarrow \mathbb Z\Gamma^n$ induces an inclusion on mod $p$ reductions. For this we don't really need $M$ to be a direct summand. Knowing that {\it the quotient $Q=\mathbb Z\Gamma^n/M$ is torsion-free as an abelian group} is good enough:
Suppose $v\in M$ and $v=pw$ for some $w\in\mathbb Z\Gamma^n$. In the quotient $Q$ we have $\overline v=0$ and since $Q$ is torsion-free also $\overline w=0$. But that means $w\in M$ and therefore $v\in pM$. So, we've shown that $M\cap p\mathbb Z\Gamma^n=pM$, which is the same as saying that $M_p\ra\mathbb F_p\Gamma^n$ is injective.  
So, the proof of Corollary \ref{bass} also applies to such $M$. This proves Corollary \ref{freez}.

\section{\label{2relator}$2$-relator groups acting on hyperbolic space}

Until now, we have primarily focused on surface groups in this paper. They are two-dimensional groups acting on hyperbolic space, and requiring a single relation to present. Moreover, passing to finite index subgroups we get surface groups again but now of higher genus and (if we pick the subgroup correctly) with large infimum displacement. In higher dimensions $n\geq 3$ we can start with an arithmetically constructed uniform lattice in $\SO(n,1)$ and then pass to a deep enough congruence subgroup to get a group action with large infimum displacement on the hyperbolic space $\mathbb H^n$ (\footnote{For example, the group of invertible $4\times 4$ matrices with integer entries, preserving the form $x_1^2+x_2^2+x_3^2-7x_4^2$ and congruent to the identity matrix modulo $N$ for large enough $N$ acts on $\mathbb H^3=\SO(3,1)/\SO(3)$ in this way.}). It well known that---in contrast to surface groups---these higher dimensional lattices require more than one relation. In fact, they require more than two. The reason is that these uniform lattices have a non-zero cohomology class in dimension $n\geq 3$, while we will show next that $2$-relator groups acting with sufficiently large displacement on $\mathbb H^n$ are cohomologically $2$-dimensional. 

Shifting attention for a moment to these $2$-relator groups $\Gamma$ and $2$-complexes $X$ presenting them as such, the freeness of $\pi_2X$ comes out in the wash. A new wrinkle is that we do not know whether such $2$-relator groups have aspherical presentation $2$-complexes $Y$. For any that do (in particular, for the high genus surface groups) it is clear what a {\it standard} $2$-complex with fundamental group $\Gamma$ is (one homotopy equivalent to $Y\vee S^2\vee\dots\vee S^2$) and we get a version of Theorem \ref{standard} from the introduction. In summary, we have 



\begin{corollary}
\label{2relcor}
Suppose $X$ is a finite $2$-complex with two $2$-cells and fundamental group $\Gamma$. If $\Gamma$ acts isometrically on $\mathbb H^n$ with infimum displacement $\geq 2000$, then
\begin{itemize}
\item
the cohomological dimension of $\Gamma$ is $\leq 2$,
\item
$\pi_2X$ is free, and 
\item
if $\Gamma$ has an aspherical presentation $2$-complex $Y$ then $X$ is standard.\footnote{Homotopy equivalent to $Y$ or $Y\vee S^2$ or $Y\vee S^2\vee S^2$. The third case happens only if $\Gamma$ is a free.}
\end{itemize} 
\end{corollary}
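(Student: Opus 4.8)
The plan is to establish the three bullets in order, each feeding into the next. For the first bullet, I would follow the strategy already foreshadowed in the introduction (the footnote to Corollary \ref{2rel}): a finite $2$-complex $X$ with two $2$-cells has a chain complex $\mathbb Z\Gamma^2 \xrightarrow{\partial_2} \mathbb Z\Gamma^m \xrightarrow{\partial_1} \mathbb Z\Gamma$ for its universal cover, and $H_2(\widetilde X) = \pi_2 X = \ker \partial_2$, while $\Gamma$ is the fundamental group so $H_1(\widetilde X) = 0$. The cohomological dimension is $\le 2$ precisely when $\ker\partial_2$ is projective (equivalently, when the homological dimension is $\le 2$, so that $\mathbb Z$ admits a length-$2$ projective resolution over $\mathbb Z\Gamma$). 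The key input is Corollary \ref{freez}: the image $\im\partial_2 \subset \mathbb Z\Gamma^m$ is generated by the two columns of $\partial_2$, and the quotient $\mathbb Z\Gamma^m/\im\partial_2 = C_1/\im\partial_2$ is the first homology of the two-term truncation, which sits in an exact sequence with $H_1(\widetilde X) = 0$ and $H_0(\widetilde X) = \mathbb Z$, so it is torsion-free (indeed it is $\ker\partial_1$, a submodule of the free module $\mathbb Z\Gamma^m$, hence torsion-free as an abelian group). Therefore $\im\partial_2$ is free by Corollary \ref{freez}. Then $\pi_2 X = \ker\partial_2$ is a direct summand of $\mathbb Z\Gamma^2$ (since $\partial_2$ maps onto a free, hence projective, module, the map splits), so $\ker\partial_2$ is stably free, in particular projective, giving $\cd\Gamma \le 2$.

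For the second bullet, I now know $\im\partial_2$ is free of some rank $f$; since it is generated by two elements, $f \le 2$. If $f = 2$ then $\partial_2$ is injective and $\pi_2 X = 0$, trivially free. If $f \le 1$ then $\ker\partial_2$ is free: when $f = 1$, $\ker\partial_2$ is a rank-one submodule of $\mathbb Z\Gamma^2$ fitting in $0 \to \ker\partial_2 \to \mathbb Z\Gamma^2 \to \im\partial_2 \to 0$ with $\im\partial_2 \cong \mathbb Z\Gamma$ free, so the sequence splits and $\ker\partial_2 \cong \mathbb Z\Gamma$; when $f = 0$, $\partial_2 = 0$ and $\pi_2 X = \mathbb Z\Gamma^2$. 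In all cases $\pi_2 X$ is free. (One could also phrase this uniformly: $\pi_2 X$ is a direct summand of $\mathbb Z\Gamma^2$ generated by at most two elements, hence free by Corollary \ref{bass}.)

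For the third bullet, assume $\Gamma$ has an aspherical presentation $2$-complex $Y$, necessarily with $m - 2 + $ (number of relators) worth of cells balancing Euler characteristics; by Tietze/Louder type results already cited in the introduction (the theorem of Louder, via the discussion preceding Theorem \ref{standard}), $X$ becomes standard after wedging on $\#(2\text{-cells of }X) - (\chi(X) - \chi(Y)) $ spheres, and with two $2$-cells this forces $X \vee S^2$ (or fewer) to be homotopy equivalent to $Y$ wedge some $S^2$'s. On $\pi_2$ this says $\pi_2 X$ is stably free of rank $\le 2$; combined with the second bullet ($\pi_2 X$ free) and the computation of its rank, $X$ is homotopy equivalent to $Y$, $Y \vee S^2$, or $Y \vee S^2 \vee S^2$, the last occurring only when $\partial_2 = 0$, i.e. when $\Gamma$ is free. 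The main obstacle is the first bullet: one must be careful that the hypotheses of Corollary \ref{freez} genuinely apply — that $\im\partial_2$ really is two-generated (immediate, two $2$-cells) and that $\mathbb Z\Gamma^m/\im\partial_2$ is torsion-free (this needs the identification with $\ker\partial_1 \subseteq \mathbb Z\Gamma^m$, using $H_1(\widetilde X) = 0$, which is where asphericity-free reasoning about the universal cover enters) — and then that projectivity of $\pi_2 X$ indeed yields $\cd\Gamma \le 2$ rather than merely a statement about $X$; this last step uses that $X$ being a $2$-complex already gives a partial free resolution of $\mathbb Z$ which projectivity of $\pi_2 X$ completes to length $2$.
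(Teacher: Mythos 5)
Your first bullet is essentially the paper's argument and is fine, modulo two small slips. The module $C_1/\operatorname{Im}\partial_2 = C_1/\ker\partial_1$ is isomorphic to $\operatorname{Im}\partial_1\subset C_0$, not to $\ker\partial_1$; either way it is a submodule of a free module, hence torsion-free as an abelian group, so Corollary~\ref{freez} applies and $\operatorname{Im}\partial_2$ is free. But the concluding clause ``stably free, in particular projective, giving $\cd\Gamma\le 2$'' misstates the logic: projectivity of $\pi_2 X$ alone only bounds $\cd\Gamma$ by $3$ (it is the third syzygy of $\mathbb Z$), whereas what actually gives $\cd\Gamma\le 2$ is that $\operatorname{Im}\partial_2$ is free, so $0\to\operatorname{Im}\partial_2\to C_1\to C_0\to\mathbb Z\to 0$ is a free resolution of length $2$ — which is what the paper says. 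Since you did prove $\operatorname{Im}\partial_2$ free, your argument is complete; only the final sentence should be rephrased.

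Your second bullet has a real gap in the $f=1$ case: from the split short exact sequence $0\to\ker\partial_2\to\mathbb Z\Gamma^2\to\mathbb Z\Gamma\to 0$ you can only conclude $\ker\partial_2\oplus\mathbb Z\Gamma\cong\mathbb Z\Gamma^2$, i.e.\ that $\ker\partial_2$ is \emph{stably} free of stable rank one. Concluding $\ker\partial_2\cong\mathbb Z\Gamma$ requires a cancellation theorem, and such cancellation is precisely the content of Corollary~\ref{freez} — it is not automatic for group rings (Dunwoody's trefoil example in the introduction is a stably free, non-free, two-generated module). Your parenthetical remark — $\pi_2 X$ is a direct summand of $\mathbb Z\Gamma^2$, hence a quotient of it, hence two-generated, hence free by Corollary~\ref{freez} — is the correct argument and is exactly what the paper does. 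You should promote the parenthetical to the main argument and drop the case analysis.

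Your third bullet is wrong in approach. Louder's theorem is a Nielsen-equivalence statement about \emph{surface} groups; it does not apply to a general group $\Gamma$ acting on $\mathbb H^n$ with large infimum displacement, which is the setting of this corollary. The paper does not invoke Louder here. Instead it uses an elementary construction: build any map $Y\to X$ inducing an isomorphism on $\pi_1$ (possible since the attaching maps of $Y$'s $2$-cells are relators, hence null in $\pi_1 X$), then, using that $\pi_2 X$ is free (second bullet), extend it to a homotopy equivalence from $Y$, $Y\vee S^2$, or $Y\vee S^2\vee S^2$ by sending wedge spheres to a free basis of $\pi_2 X$; the third case forces the relation module to vanish so $\cd\Gamma\le 1$, hence $\Gamma$ is free by Stallings. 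You should replace the appeal to Louder by this direct argument, since Louder's result is only used in the alternative, surface-specific proof of Theorem~\ref{standard} in Section~\ref{topology}.
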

\begin{proof}
Look at the chain complex on the universal cover:
$$
\pi_2(X)\ra C_2(\widetilde X)\ra C_1(\widetilde X)\ra C_0(\widetilde X)\ra\mathbb Z. 
$$ 
The image of the second map is called the relation module $R$. It is generated by two elements, a submodule of a free module, and the quotient $C_1/R$ is again a submodule of a free module. Therefore, by the remark at the end of the previous section, we can apply Bass's method to conclude $R$ is a free $\mathbb Z\Gamma$-module. Since $R$ is also the kernel of the third map, we get a free resolution $R\ra C_1\ra C_0\ra \mathbb Z$ of length $2$. This is the same as saying that the cohomological dimension of $\Gamma$ is $\leq 2$, so we have proved the first bullet. 

Since $R$ is free, $C_2$ splits as a direct sum $\pi_2(X)\oplus R$. So, $\pi_2(X)$ is a stably free $\mathbb Z\Gamma$-module generated by two elements. Corollary \ref{freez} implies it is free. This proves the second bullet. 

Finally, suppose there is an aspherical presentation $2$-complex $Y$. Start by building an arbitrary $\pi_1$-isomorphism $Y\ra X$. Since $\pi_2X$ is free, we can extend it to a homotopy equivalence from a standard complex $Y$ or $Y\vee S^2$ or $Y\vee S^2\vee S^2$ by mapping the $2$-spheres to a basis for $\pi_2X$. In the third case $\pi_2X=\mathbb Z\Gamma^2$, so the relation module $R$ vanishes, so $\Gamma$ has cohomological dimension one and hence, by Stallings' theorem (\cite{stallings}), is a free group. 
\end{proof} 
But, if $\Gamma$ does not have an aspherical presentation $2$-complex, then it is conceivable that there is a pair of $2$-complexes $X$ and $X'$ that have the same $\pi_2$ but are not homotopy equivalent. 

\subsection*{Flat and hyperbolic $3$-dimensional $2$-relator groups}
Torsion-free $1$-relator groups have aspherical presentation $2$-complexes (\cite{cockcroft}), so they are at most $2$-dimensional. This is no longer true for $2$-relator groups. 

The simplest $3$-dimensional example of a $2$-relator group was pointed out to me by Ian Leary. It is the fundamental group of the mapping torus of the matrix $\left(\begin{array}{cc}0&1\\-1&0\end{array}\right)$ acting on $\mathbb T^2$. Note that this is a closed, flat\footnote{The manifold is flat since it is obtained by gluing the ends of $\mathbb T^2\times[0,1]$ by an isometry.} $3$-manifold, so the fundamental group is $3$-dimensional. It has a $3$-generator and $3$-relator presentation $\left<a,b\mid[a,b]=1, tat^{-1}=b,tbt^{-1}=a^{-1}\right>$. One can eliminate the generator $b$ to get a $2$-generator, $2$-relator presentation. 

There are also hyperbolic $3$-manifold examples that were explained to me by Jean Pierre Mutanguha. The mapping torus of the matrix $\left(\begin{array}{cc}2&1\\1&1\end{array}\right)$ acting on the punctured torus is a hyperbolic $3$-manifold with a single cusp\footnote{This manifold is homeomorphic to the figure-eight knot complement (see p. 177 of \cite{thurston}).}. Its presentation is $\left<a,b,t\mid tat^{-1}=a^2b,tbt^{-1}=ab\right>$ and since the second relation says $a=[t,b]$ one can elliminate $a$ together with this relation to get a 1-relator presentation. One can close off the cusp by gluing in a solid torus, and for all but finitely many choices of gluing parameters (a pair of relatively prime numbers $(p,q)$) one gets a closed hyperbolic $3$-manifold (see 4.7 in \cite{thurstonnotes}). On the level of fundamental groups, the gluing introduces a new relation of the form $t^p=[a,b]^q$. So, one ends up with a closed hyperbolic $3$-manifold whose fundamental group has a $2$-generator $2$-relator presentation 
$$
\left<b,t\mid t[t,b]t^{-1}=[t,b]^2b,\hspace{0.3cm}t^p=[[t,b],b]^q\right>.
$$

\section{\label{topology}An improved Tietze's theorem for surface fundamental groups}
An old theorem of Tietze \cite{fox} says that two $2$-complexes with the same fundamental group become homotopy equivalent after wedging both of them with enough $2$-spheres. 
This section is about improvements on this theorem when the fundamental group is that of a closed surface $\Sigma$. The main point is to interpret a Nielsen equivalence result of Louder in this light. 

\subsection*{Minimal Euler characteristic} First note that if $X$ is a $2$-complex with fundamental group $\pi_1\Sigma$ and minimal Euler characteristic $\chi(X)=\chi(\Sigma)$, then $X$ is homotopy equivalent to $\Sigma$. 
\begin{proof}
The complexes become homotopy equivalent after wedging both with the same large number of $2$-spheres $n$. Since $\Sigma$ is aspherical, on $\pi_2$ this homotopy equivalence gives $\pi_2 S\oplus\mathbb Z\Gamma^n\cong\mathbb Z\Gamma^n$. So (see e.g. \cite{montgomery}) $S$ is also aspherical, and hence homotopy equivalent to $\Sigma$.
\end{proof}
\subsection*{Nielsen equivalence for surface groups}
The orientable surfaces have presentations 
$$\left<x_1,y_1,\dots,x_g,y_g\mid [x_1,y_1]\cdots[x_g,y_g]=1\right>$$ while the nonorientable ones have presentations $$\left<x_1,\dots,x_r\mid x_1^2\cdots x_r^2=1\right>.$$ A {\it standard generating set} is one of these, possibly with some extra generators $z_1,\dots,z_k$ satisfying the trivial relations $z_1=1,\dots,z_k=1$ thrown in at the end. 

Now, let $X$ be a finite presentation $2$-complex with $n$ generators $e_1,\dots,e_n$ for the surface group, and fix a $\pi_1$-isomorphism $f:X\ra\Sigma$. In \cite{louder}, Louder showed 
\begin{itemize}
\item There is a free group automorphism $\varphi:F_n\ra F_n$ so that $f\circ\varphi(e_1),\dots,f\circ\varphi(e_n)$ is a standard generating set for $\pi_1\Sigma$.
\end{itemize} 
\subsection*{Interpretation as quantitative variant of Tietze's theorem for surface groups}
For concreteness, suppose it is one representing a genus $g$ orientable surface with $k$ trivial generators at the end (the argument in the non-orientable case is similar). Form a new complex 
$$
Y=X\cup D^2_0\cup D^2_1\cup\dots\cup D^2_k
$$
by attaching $k+1$ different $2$-cells to $X$. The disk $D^2_0$ is attached along the commutator $[\varphi(e_1),\varphi(e_2)]\cdots[\varphi(e_{2g-1}),\varphi(e_{2g})]$ and the other disks $D^2_i$ are attached along $\varphi(e_{2g+i})$. By construction, these attaching maps are nullhomotopic in $\pi_1X$, so $Y$ is homotopy equivalent $X\vee S^2\vee\dots\vee S^2$. On the other hand, the map $f$ extends to $Y$ and its restriction to the union $S=D_0^2\cup\dots\cup D_k^2$ is a $\pi_1$-isomorphism. Since $f:S\ra\Sigma$ is a $\pi_1$-isomorphism that extends to the $2$-cells of $X$, the attaching maps of the $2$-cells of $X$ are null-homotopic in $S$, and we conclude that $Y$ is also homotopy equivalent to $S^2\vee\dots\vee S^2\vee S$. Finally, since the $2$-complex $S$ has the minimal possible Euler characteristic $\chi(S)=\chi(\Sigma)$ among $2$-complexes with this fundamental group, the map $f:S\ra\Sigma$ is a homotopy equivalence. In summary, $X$ becomes standard after wedging on $k+1$ different $2$-spheres: 
$$
X\vee (k+1)S^2\sim\Sigma\vee (\# \mbox{ of } 2\mbox{-cells of } X)S^2. 
$$
\subsection*{Second proof of Theorem \ref{standard}} 
The situation that our division algorithm can say something about is when $X$ has one vertex, two $2$-cells and Euler characteristic $\chi(X)=\chi(\Sigma)+1$. In this case, it is easy to see\footnote{In general, $k+1=\#(2\mbox{-cells of }X)-(\chi(X)-\chi(\Sigma))$} that $k=0$ and the above homotopy equivalence becomes
$$
X\vee S^2\sim\Sigma\vee S^2\vee S^2. 
$$
On $\pi_2$ this says $\pi_2X\oplus \mathbb Z\Gamma\cong\mathbb Z\Gamma^2$. Therefore, by Corollary \ref{freez}, $\pi_2X$ is free. From here we can proceed as in the proof of the third bullet of Corollary \ref{2relcor} to prove Theorem \ref{standard}.

\bibliography{2complexes}
\bibliographystyle{amsplain}

\end{document}